\let\@wraptoccontribs\wraptoccontribs
\newcommand\blfootnote[1]{%
  \begingroup
  \renewcommand\thefootnote{}\footnote{#1}%
  \addtocounter{footnote}{-1}%
  \endgroup
}
\renewcommand*\env@matrix[1][*\c@MaxMatrixCols c]{%
  \hskip -\arraycolsep
  \let\@ifnextchar\new@ifnextchar
  \array{#1}}
\newcommand\Seq{\text{Seq}}
\def\CVD{{\hfill\hfil{\lower 2pt\hbox{\vrule\vbox to 7pt
{\hrule width  5pt\varphifill\hrule}\varphirule}}}\par}
\DeclareMathOperator{\Pic}{Pic}
\DeclareMathOperator{\Orb}{Orb}
\newcommand{\mysetminus}{\mathbin{\fgebackslash}}
\newtheorem{theorem}{Theorem}[section]
\newtheorem{lemma}[theorem]{Lemma}
\newtheorem{proposition-definition}[theorem]{Proposition-Definition}
\newtheorem{corollary}[theorem]{Corollary}
\theoremstyle{definition}
\newtheorem{definition}[theorem]{Definition}
\theoremstyle{remark}
\newtheorem{remark}{Remark}
\newtheorem{example}{Example}
\theoremstyle{theorem}
\theoremstyle{remark}
\newcommand*{\Scale}[2][4]{\scalebox{#1}{$#2$}}%
\title[Semigroup orbit counts]{Orbit counting in polarized dynamical systems}   
\author[Wade Hindes]{Wade Hindes}
\begin{document} 
\maketitle
\blfootnote{2010 \emph{Mathematics Subject Classification}: Primary: 37P15, 37P05. Secondary: 11G50, 11D45.}
\begin{abstract} We extend recent orbit counts for finitely generated semigroups acting on $\mathbb{P}^N$ to certain infinitely generated, polarized semigroups acting on projective varieties. We then apply these results to semigroup orbits generated by some infinite sets of unicritical polynomials.  
\end{abstract}   
\section{Introduction} 
Given a projective variety $V$, a height function $H:V\rightarrow\mathbb{R}$, and a subset $X\subseteq V$ of some interest, it is often useful to understand the growth rate of the number of points in $X$ of bounded height. For instance, if $X$ is a set or rational or integral points, then this growth rate is known to encode many interesting arithmetic and geometric invariants of $V$ (like its  dimension, genus, or rank of an associated Mordell-Weil group); for examples, see \cite{BP,Faltings,HeathBrown1,HeathBrown2,Schanuel}. Likewise, when $X$ is a dynamical orbit generated by a collection of self maps of $V$, then the growth rate on the number of points in $X$ of bounded height frequently detects dynamical degrees \cite{Me:dyndeg,KawaguchiSilverman} as well as other invariants \cite{K3,Markoff}. In this paper, we take up this dynamical orbit counting problem, generalizing the main results from \cite{Me:monoid} in two ways: first we allow \emph{infinitely generated} semigroups, and second we allow $V$ to be any projective variety with a polarizable set of maps (not necessarily $\mathbb{P}^N$) . As an application, we obtain precise estimates for the number of points of bounded height in many new orbits, including those generated by the (possibly infinite) set of polynomials of the form $z^q+1$, where $q$ is a Mersenne prime; see Corollary \ref{cor:amusing} below.

To describe our results, we fix some notation. Let $K$ be a global field and let $V$ be a projective variety defined over $K$. Then, given a collection $S$ of endomorphisms of $V$, we define $M_S$ to be the semigroup generated by $S$ under composition together with the identity function (formally a monoid). In particular, to a point $P\in V$ we associate an \emph{orbit}, 
\[\Orb_S(P)=\{f(P)\,:\, f\in M_S\},\]
and study the growth rate of the number of points in $\Orb_S(P)$ of bounded height. There is very little known about this problem in general, even for sets $S$ of polynomials in one-variable. So, as in the case of iterating a single morphism \cite{Call-Silverman}, we make this problem more tractable by assuming that our dynamical system is \emph{polarized}: there is a divisor class $\eta\in\Pic(V)\otimes\mathbb{R}$ and a collection of real numbers $d_\phi$ such that $\phi^{*}(\eta)=d_\phi\eta$ for all $\phi\in S$. For in this case, there is a minimal constant $C(V,\eta,\phi)$ such that   
\[\big|h_\eta(\phi(P))-d_\phi h_\eta(P)\big| \leq C(V,\eta,\phi)\] 
for all $P\in V$; here $h_\eta$ is a height function associated to $\eta$. Then, in order to generalize the techniques in \cite{Me:monoid} for finitely generated semigroups acting on $\mathbb{P}^N$ to infinitely generated polarized semigroups acting on arbitrary projective varieties, we define the following conditions:        
\begin{definition}\label{def:htcontrolled,uniformdiscrete} A set of endomorphisms $S$ on a projective variety $V$ is called \emph{height controlled} with respect to a divisor class $\eta\in\Pic(V)\otimes\mathbb{R}$ if: \vspace{.1cm}   
\begin{enumerate} 
\item for all $\phi\in S$ there exists a real number $d_\phi>1$ such that $\phi^*(\eta)=d_\phi\eta$,\vspace{.1cm}   
\item the corresponding height constants are bounded: $\sup_{\phi\in S}C(V,\eta,\phi)$ is finite. \vspace{.1cm} 
\end{enumerate}
Moreover, if in addition the set of real numbers $T:=\{\log(d_\phi)\,:\phi\in S\}$ is \emph{uniformly discrete}, i.e., there exists a constant $\delta_T>0$ such that $\big|\log(d_\phi)-\log(d_\psi)\big|>\delta_T$ for all distinct $\phi,\psi$ in $S$, then we say that $S$ is \emph{\textbf{height controlled and uniformly log-discrete}} with respect to $\eta$.
\end{definition}
\begin{remark} The first condition, which we call height controlled but has also been called \emph{bounded}, has been used in several places to study both semigroup and random sequential orbits; see, for instance, \cite{Me:stochastic,Me:dyndeg,Kawaguchi,Mello}. However the second condition is a new and technical one, allowing us to control the dominant poles of some associated meromorphic generating functions; see Section \ref{sec:countingfunctions} for details. 
\end{remark} 
\begin{example} Let $S=\{\phi_1,\dots,\phi_s\}$ be any finite set of endomorphisms of $\mathbb{P}^N$. If $\deg(\phi)\geq2$ for all $\phi\in S$ and $\deg(\phi)\neq\deg(\psi)$ for all distinct $\phi,\psi\in S$, then $S$ is height controlled and uniformly log-discrete. More generally, let $S=\{\phi_1,\dots,\phi_s\}$ be any finite set of endomorphisms on a projective variety $V$ and assume that there is a divisor class $\eta\in\Pic(V)\otimes\mathbb{R}$ such that for each $\phi\in S$ there is a real number $d_\phi>1$ satisfying $\phi^*(\eta)=d_\phi\eta$ . If in addition $d_\phi\neq d_\psi$ for all distinct $\phi,\psi\in S$, then $S$ is height controlled and uniformly log-discrete with respect to $\eta$.      
\end{example}  
\begin{example}\label{eg:htcontrolledandlogdiscrete} For an infinite example, fix $c\in\overline{\mathbb{Q}}$ and $a,b\in\mathbb{N}$. Then the set of unicritical polynomials with constant term $c$ and degree $d=a^n+b$ for some $n\geq1$, 
\[S_{a,b,c}:=\{x^d+c\,:\,\text{$d=a^n+b$ for some $n\geq1$}\},\]
is a height controlled and uniformly log-discrete set of endomorphisms of $\mathbb{P}^1$.   
\end{example}  
With this terminology in place we are ready to state our first result, an upper bound for the number of points of bounded height in semigroup orbits generated by height controlled and uniformly log-discrete sets. Moreover, if in addition the semigroup is free, then we can give an associated lower bound. In particular, since generic compositional semigroups tend to be free (e.g., see \cite[\S4]{Me:monoid}), it is reasonable to expect that the bounds we give below apply more generally, providing a template for future work. In what follows, $H_\eta=\exp\circ h_\eta$ denotes the multiplicative Weil height associated to $\eta$. \vspace{.05cm}           
\begin{theorem}\label{thm:functioncount} Let $K$ be a global field, let $V$ be a projective variety over $K$, and let $S$ be a height controlled and uniformly log-discrete set of endomorphisms on $V$ with respect to a divisor class $\eta\in\Pic(V)\otimes\mathbb{R}$. Then the following statements hold: \vspace{.1cm} 
\begin{enumerate} 
\item There is a positive constant $b=b(S,\eta)$ and a constant $B_{S,\eta}$ such that 
\begin{equation*}
\#\big\{Q\in \Orb_{S}(P)\,:\, H_\eta(Q)\leq B\big\}\ll (\log B)^{b}\vspace{.1cm}
\end{equation*} 
for all $P\in V$ with $H_\eta(P)>B_{S,\eta}$. \vspace{.3cm} 
\item If $M_S$ is free (non-abelian), then for all $\epsilon>0$ there is a positive constant $b=b(S,\eta,\epsilon)$ and a constant $B_{S,\eta}$ such that  \vspace{.1cm} 
\begin{equation*}
(\log B)^{b}\ll\#\big\{f\in M_S\,:\, H_\eta(f(P))\leq B\big\}\ll (\log B)^{b+\epsilon}\vspace{.15cm}
\end{equation*} 
for all $P\in V$ with $H_\eta(P)>B_{S,\eta}$.  \vspace{.1cm} 
\end{enumerate} 
Here in both statements, the implicit constants depend on $P$, $S$, $\eta$ (and $\epsilon$ for statement (2)). \vspace{.1cm}  
\end{theorem}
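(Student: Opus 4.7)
The plan is to reduce the orbit-counting problem to a combinatorial count of compositions weighted by their degree, and then estimate this count via a Dirichlet-type generating function whose abscissa of convergence is dictated by the equation $\sum_{\phi \in S} d_\phi^{-s} = 1$. The uniform log-discreteness hypothesis plays its role at the very end, in controlling the poles of that generating function on the critical vertical line.

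\textbf{Step 1 (Height telescoping).} For any composition $f = \phi_{i_n} \circ \cdots \circ \phi_{i_1}$ with $\phi_{i_k} \in S$, I would iterate the defining inequality $|h_\eta(\phi(Q)) - d_\phi h_\eta(Q)| \le C$ (with $C := \sup_{\phi \in S} C(V,\eta,\phi) < \infty$) to obtain
\[
\bigl| h_\eta(f(P)) - d_f\, h_\eta(P) \bigr| \;\le\; C\sum_{k=1}^{n}\prod_{j>k} d_{\phi_{i_j}}, \qquad d_f := \prod_{k=1}^n d_{\phi_{i_k}}.
\]
Uniform log-discreteness together with $d_\phi > 1$ forces $d_{\min} := \inf_{\phi \in S} d_\phi > 1$, since $\{\log d_\phi\}$ is a closed discrete subset of $(0,\infty)$ whose minimum is attained. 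The geometric series then bounds the right side by $C(d_{\min}-1)^{-1} d_f$, so that for $h_\eta(P) > B_{S,\eta} := 2C/(d_{\min}-1)$, both inequalities $\tfrac{1}{2} d_f h_\eta(P) \le h_\eta(f(P)) \le 2 d_f h_\eta(P)$ hold. Consequently the orbit count in (1) is bounded above by $\#\{f \in M_S : d_f \ll \log B\}$, and in the free case the count in (2) is trapped between two such weighted composition counts.

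\textbf{Step 2 (Generating function).} Letting $t_k = \log d_\phi$ run over the uniformly discrete set $T$, I introduce
\[
P(s) \;=\; \sum_{\phi \in S} d_\phi^{-s} \;=\; \sum_k e^{-s t_k}, \qquad Z(s) \;=\; \sum_{n \ge 0} P(s)^n \;=\; \frac{1}{1 - P(s)}.
\]
The spacing condition $t_{k+1} - t_k > \delta_T$ shows $P(s)$ converges for all $\Re(s) > 0$; monotonicity on the positive real axis then gives a unique $\rho > 0$ with $P(\rho) = 1$ (provided $|S| \ge 2$; the cases $|S| \in \{0,1\}$ are handled directly). In the free case $Z(s)$ equals $\sum_{f \in M_S} d_f^{-s}$, is meromorphic in a neighborhood of $\Re(s) \ge \rho$, and has a simple pole at $s = \rho$. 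A Perron/Tauberian argument applied to $Z(s)$ then yields $\#\{f \in M_S : d_f \le X\} \ll X^{\rho + \epsilon}$ unconditionally, and in the free case $\gg X^{\rho - \epsilon}$, which combined with Step~1 produces the theorem with $b = \rho$.

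\textbf{Main obstacle.} The serious work is extracting the lower bound with the correct exponent $\rho - \epsilon$, rather than the crude $\log|S|/t_2$ one obtains by restricting to words in two letters. This is where uniform log-discreteness becomes indispensable: it ensures that $P(s)$ is a well-behaved almost-periodic function of $\Im(s)$ on vertical lines, so that the poles of $Z(s)$ on $\Re(s) = \rho$ are isolated and a Wiener-Ikehara-type theorem applies to $Z(s) - c(s - \rho)^{-1}$. Without this hypothesis, the frequencies $\{t_k\}$ could accumulate and $Z(s)$ could develop a natural boundary along $\Re(s) = \rho$, obstructing any Tauberian analysis. A secondary technical point is that for part~(1) one must count orbit \emph{points}, not compositions, so care is needed when $M_S$ is not free; fortunately the upper bound in (1) is insensitive to this since compositions only overcount orbit points.
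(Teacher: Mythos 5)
Your proposal takes a genuinely different route from the paper. Where you work directly with the Dirichlet series $P(s)=\sum_{\phi\in S} d_\phi^{-s}$ and its quasi-inverse $Z(s)=(1-P(s))^{-1}$, identifying the exponent $b$ as the unique $\rho>0$ with $P(\rho)=1$, the paper instead approximates each $\log d_\phi$ by rationals $n_{t,\delta}/u_\delta<\log d_\phi<m_{t,\delta}/u_\delta$ (Lemma~\ref{lem:diophantineapprox}), converts the count into two restricted integer-composition counts, applies the ordinary generating function machinery of Flajolet--Sedgewick (Theorem~\ref{thm:generatingfunction}), and finally shows via the Mean Value Theorem that the two resulting exponents $b_{1,\delta},b_{2,\delta}$ converge to a common limit (your $\rho$) as $\delta\to 0$. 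Your Step~1 is correct and matches Lemma~\ref{lem:tate}. Your upper bound is actually cleaner than the paper's: $\#\{f:d_f\le X\}\le X^{\rho+\epsilon}Z(\rho+\epsilon)$ by Rankin's trick needs no contour analysis and no freeness, and directly gives part~(1).

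The gap is in the lower bound of part~(2), and it is exactly the step you flag as the ``main obstacle.'' Invoking a Wiener--Ikehara or Perron argument for the general Dirichlet series $Z(s)$ is not routine here: when the frequencies $\{\log d_\phi\}$ are commensurable (e.g.\ $S\subset\{z^{2^n}+c\}$), the line $\Re(s)=\rho$ carries infinitely many poles of $Z$ in arithmetic progression, and one must control their aggregate contribution to the Perron integral. Your diagnosis of why uniform log-discreteness helps is also slightly off: it is not needed to rule out a natural boundary (inside the half-plane of absolute convergence $1-P(s)$ is analytic, so its zero set is automatically discrete), but rather to ensure $P(s)$ converges absolutely on $\Re(s)>0$ at all, so that $\rho$ lies in the interior of the convergence region; and, as the paper does, to guarantee the approximating maps $t\mapsto n_{t,\delta}$ are injective. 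This is precisely the difficulty the paper's rational-approximation detour is designed to avoid: the integer generating functions $\mathcal{F}_{T_{i,\delta}}$ have a \emph{finite} singular locus on their circle of convergence and a unique dominant pole (guaranteed by the $\gcd=1$ arrangement), so the clean asymptotic of \cite[Theorem V.1]{analytic-combinatorics} applies. If you want to keep the Dirichlet-series framing, the simplest repair of your lower bound is to avoid the Tauberian step entirely: truncate $S$ to a finite $S_N$ with abscissa $\rho_N>\rho-\epsilon$ (possible since $\rho_N\nearrow\rho$), and invoke the finite-generating-set result of \cite{Me:monoid} for $S_N$; since $M_{S_N}\subseteq M_S$, this gives $\#\{f\in M_S:d_f\le X\}\gg X^{\rho_N}\gg X^{\rho-\epsilon}$.
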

Of course, one would like upper and lower bounds for the number of \emph{points} of bounded height in orbits, not just \emph{functions} yielding a bounded height relation. However, to do this for a given basepoint $P$ we need to control the number of distinct functions $f,g\in M_S$ that can agree at $P$, i.e., $f(P)=g(P)$. This is quite challenging in higher dimensions, even for finitely generated semigroups. On the other hand, it is tractable in dimension one due to Siegel's integral point theorem and the polynomial classification theorem of Bilu-Tichy \cite{Bilu}; essentially, it suffices to control the $\mathfrak{s}$-integral points on the \emph{finitely} many curves $f(x)=g(y)$ where $f$ and $g$ are in the generating set; see \cite{Me:monoid}. However, this problem remains quite difficult (even for $\mathbb{P}^1$) when $M_S$ is \emph{infinitely generated}, since there are infinitely many curves to consider. Nevertheless, there is a non-trivial case where it is possible, namely when we restrict ourselves to polynomials of the form $f(z)=z^q+1$ with prime exponent and constant term one. In this case, we combine Theorem \ref{thm:functioncount} above with the bounds in \cite{Brindza} on the $\mathfrak{s}$-integral solutions to the \emph{Catalan equations} $x^n-y^m=1$ to prove the following result. Below, $H: \mathbb{P}^1\rightarrow\mathbb{R}$ denotes the standard absolute and multiplicative Weil height. \vspace{.1cm}      

\begin{theorem}\label{thm:catalan} Let $K$ be a number field and let $w_K$ be the number of roots of unity in $K$. Given any uniformly log-discrete set $\mathfrak{q}$ of rational prime numbers,  define
\[S_{K,\mathfrak{q}}=\big\{z^q+1:\text{$q\in\mathfrak{q}$ and $\gcd(q,w_K)=1$}\big\}.\] 
Then for all $\epsilon>0$ there is a positive constant $b=b(K,\mathfrak{q},\epsilon)$ such that \vspace{.125cm} 
\begin{equation*}
(\log B)^{b}\ll\#\big\{Q\in \Orb_{S_{K,\mathfrak{q}}}(P)\,:\, H(Q)\leq B\big\}\ll (\log B)^{b+\epsilon}\vspace{.125cm}
\end{equation*} 
for all $P\in\mathbb{P}^1(K)$ with $H(P)>4$.  \vspace{.1cm}   
\end{theorem}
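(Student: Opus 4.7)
The plan is to apply Theorem \ref{thm:functioncount}(2) to count the compositions $f \in M_{S_{K,\mathfrak{q}}}$ with $H(f(P)) \leq B$, and then transfer this to a count of distinct orbit points by bounding the evaluation fibers $\{f : f(P) = Q\}$ via Brindza's $\mathfrak{s}$-integer bounds on the Catalan equation.

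I would first verify the hypotheses of Theorem \ref{thm:functioncount}. Taking $V = \mathbb{P}^1$ and $\eta = [\infty]$, each generator $\phi_q(z) = z^q + 1$ satisfies $\phi_q^{*}[\infty] = q \cdot [\infty]$ with height constant bounded by an absolute constant (easily $\leq \log 2$), so $S_{K,\mathfrak{q}}$ is height controlled; the uniform log-discreteness of $\mathfrak{q}$ is exactly $\{\log q : q \in \mathfrak{q}\}$ being uniformly discrete. I would then check that $M_{S_{K,\mathfrak{q}}}$ is free, which follows from a Ritt-style unique factorization argument: since no two generators share a degree, any compositional identity is forced to be trivial by matching degrees at each step. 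Part (2) of Theorem \ref{thm:functioncount} now yields, for each $\epsilon > 0$, a constant $b = b(K,\mathfrak{q},\epsilon)$ with
\[(\log B)^{b}\,\ll\,\#\{f \in M_{S_{K,\mathfrak{q}}} : H(f(P)) \leq B\}\,\ll\,(\log B)^{b+\epsilon},\]
for every $P \in \mathbb{P}^1(K)$ with $H(P) > 4$. The upper bound for orbit points is immediate, because $\{Q \in \Orb_{S_{K,\mathfrak{q}}}(P) : H(Q) \leq B\}$ is precisely the image of the function set above under evaluation at $P$.

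For the matching lower bound it suffices to show that the fiber size $N(Q) := \#\{f \in M_{S_{K,\mathfrak{q}}} : f(P) = Q\}$ grows at most like $(\log H(Q))^{o(1)}$, which can then be absorbed into an adjusted $b$. Given a collision $f \neq g$ with $f(P) = g(P)$, I would \emph{peel off} common outer and inner generators: a shared outermost $\phi_q$ gives $f'(P)^q = g'(P)^q$ in $K^\times$, which together with $\gcd(q,w_K) = 1$ forces $f'(P) = g'(P)$; a shared innermost $\phi_q$ is absorbed into a new basepoint $P' = \phi_q(P)$, still of height well above $4$. After peeling we may write $f = \phi_a \circ \tilde f$ and $g = \phi_b \circ \tilde g$ with $a \neq b$ prime, giving $\tilde f(P)^a = \tilde g(P)^b$; unique factorization in $\mathcal{O}_{K,\mathfrak{s}}$ (after enlarging $\mathfrak{s}$ once to kill the class group) together with $\gcd(a,b) = 1$ produce an element $W$ with $\tilde f(P) = W^b$ and $\tilde g(P) = W^a$, up to finitely many unit ambiguities. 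Expanding $\tilde f = \phi_c \circ \tilde{\tilde f}$ and letting $U = \tilde{\tilde f}(P)$ then yields the Catalan-type equation
\[W^{b} - U^{c} = 1,\]
and Brindza's effective theorem bounds its $\mathfrak{s}$-integer solutions $(W,U)$ for each prime pair $(b,c)$. Each solution pins down the inner data of the pair $(f,g)$, and iterating this peeling-and-Catalan reduction along the compositional depth of $f$ and $g$ (together with the few boundary cases where $\tilde f$ or $\tilde g$ is trivial, handled by $H(P) > 4$ and a direct height comparison) controls $N(Q)$ by the desired slowly-growing function.

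The main obstacle is the uniformity of Brindza's bound across the infinitely many prime triples $(a,b,c) \in \mathfrak{q}^3$ that can appear: Brindza's solution counts depend on the exponents, so a naive sum over all admissible triples could blow up. The uniform log-discreteness of $\mathfrak{q}$ is essential here, since it forces the primes appearing in any compositional chain with $H(f(P)) \leq B$ to be concentrated in $O(\log\log B)$ layers with geometrically growing sizes, limiting the number of triples that can contribute meaningfully. A secondary technical point is fixing the finite set of places $\mathfrak{s}$ once (depending only on $K$ and $P$, not on $f$ or $g$) so that every reduction takes place in the same ring of $\mathfrak{s}$-integers, and carefully tracking the $\mathfrak{s}$-unit ambiguities from the $a$-th and $b$-th root extractions so that they contribute only a bounded multiplicative factor to the final fiber bound.
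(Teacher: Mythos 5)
Your overall strategy matches the paper's: verify the hypotheses of Theorem~\ref{thm:functioncount}(2) via height control, uniform log-discreteness and freeness, then transfer the function count to a point count by bounding the evaluation fibers $\{f\in M_{S_{K,\mathfrak{q}}}: f(P)=Q\}$ using Brindza's theorem on $\mathfrak{s}$-integral solutions of Catalan equations. Your algebraic peeling (stripping a shared outer generator via the coprimality $\gcd(q,w_K)=1$, and extracting a common $W$ with $\tilde f(P)=W^b$, $\tilde g(P)=W^a$ when the outer degrees $a\neq b$ are coprime) is essentially the same maneuver as in the paper's Lemma~\ref{lem:catalanbd}. Two smaller points: the root extraction does not require enlarging $\mathfrak s$ to kill the class group nor tracking unit ambiguities --- since $a_1a+a_2b=1$, one can set $t=\tilde f(P)^{a_2}\tilde g(P)^{a_1}$ and check directly that $t^b=\tilde f(P)$ and $t^a=\tilde g(P)$, staying in $\mathcal O_{K,\mathfrak s}$ by integral closedness; and ``matching degrees at each step'' does not establish freeness (equal multisets of primes do not determine the order of composition), which is why the paper devotes Theorem~\ref{thm:free} to it via Mason's ABC theorem.

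The genuine gap is in how you read Brindza's theorem and, consequently, in your target fiber bound. You treat Brindza as bounding the $\mathfrak s$-integer solutions ``for each prime pair $(b,c)$,'' and you then worry about summing these bounds over infinitely many triples $(a,b,c)\in\mathfrak q^3$, proposing to control this via the uniform log-discreteness of $\mathfrak q$. But Brindza's theorem (Theorem~\ref{thm:catalanSintegers}) bounds the \emph{exponents} $n,m$ as well: there is a single constant $\kappa_{\mathfrak s}$ (depending only on $K$ and the finite set $\mathfrak s$ of places making $P$ integral) with $\max\{h(x),h(y),n,m\}\le\kappa_{\mathfrak s}$ whenever $x^n-y^m=1$ with $x,y$ not roots of unity and $nm>4$. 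So only finitely many primes $a,b,c$ can ever appear in a nontrivial collision, and there is no infinite sum to control --- uniform log-discreteness of $\mathfrak q$ plays no role at all in the fiber bound. This is precisely what lets the paper conclude (Lemma~\ref{lem:catalanbd}) that either $\theta=\tau$ and the collision propagates inward, or $\deg(\theta\circ f)$ and $\deg(\tau\circ g)$ are bounded by an explicit constant $d_{K,P}$, whence Lemma~\ref{lem:pointstofunctionscatalan} gives a \emph{constant} fiber bound $w_{K,P}=r_{K,P}\cdot t_{K,P}+1$ via a length induction. Your weaker target of a $(\log H(Q))^{o(1)}$ fiber bound is both unnecessary and problematic for the lower bound: to conclude $(\log B)^{b}\ll \#\{Q\}$ from $(\log B)^{b}\ll\#\{f\}$ you need the fiber sizes bounded by a constant (or at least by a fixed power you can then subtract from $b$ uniformly in $\epsilon$), and your sketch --- ``primes concentrated in $O(\log\log B)$ layers'' --- does not supply a quantitative fiber bound of any kind, only a bound on the length of a single chain. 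Reading the exponent-uniformity in Brindza and then running the length induction of the paper is the missing step that closes this gap.
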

\begin{remark} As a first step in the proof of Theorem \ref{thm:catalan}, we show that $M_{S_{K,\mathfrak{q}}}$ is a free semigroup. In fact, we prove a stronger statement: if $k$ is any field of characteristic zero and \[S=\Big\{z^d+c\;:\; d\geq2,\; c\in k^*\Big\},\] then $M_S$ is free; see Theorem \ref{thm:free} in Section \ref{sec:free}.  
\end{remark} 
For a concrete and more classically looking arithmetic application of Theorem \ref{thm:catalan}, consider the Mersenne primes (i.e., those of the form $q=2^n-1$). Then, after applying some additional approximation techniques discussed in Remark \ref{rem:explicit} below, we prove an explicit version of Theorem \ref{thm:catalan} in this case; for simplicity, we state this result for $K=\mathbb{Q}$. \vspace{.1cm}       
\begin{corollary}\label{cor:amusing} Let $r\in\mathbb{Q}$ with $H(r)>4$ and let $\mathfrak{m}$ be the set of Mersenne primes. Then \vspace{.15cm} 
\[
\Scale[.94]{
(\log B)^{0.60839}\ll\#\bigg\{t=((r^{q_1}+1)^{q_2}\dots +1)^{q_s}+1\;\Big\vert\; \text{$q_1,\dots, q_s\in\mathfrak{m}$, $H(t)\leq B$}\bigg\}\ll (\log B)^{0.60872}}\,.  \vspace{.15cm} 
\] 
Here the implicit constants depend on the initial point $r\in\mathbb{Q}$. \vspace{.1cm} 
\end{corollary}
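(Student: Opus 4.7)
The plan is to specialize Theorem~\ref{thm:catalan} to $K=\mathbb{Q}$ and $\mathfrak{q}=\mathfrak{m}$, and then to convert the resulting abstract exponent into the explicit numerical bounds $0.60839$ and $0.60872$.

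First, I verify the hypotheses of Theorem~\ref{thm:catalan}. Since $w_\mathbb{Q}=2$ and every Mersenne prime is odd, the coprimality condition $\gcd(q,w_\mathbb{Q})=1$ is automatic. For uniform log-discreteness of $\mathfrak{m}$, if $q_1=2^{p_1}-1<q_2=2^{p_2}-1$ are distinct Mersenne primes with prime exponents $p_1<p_2$, then $\log q_2-\log q_1$ is bounded below by $\log 2+o(1)$, so any positive $\delta<\log 2$ works in Definition~\ref{def:htcontrolled,uniformdiscrete}.

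Next, I identify the exponent $b$ from Theorem~\ref{thm:catalan} explicitly. Tracing through the proofs of Theorems~\ref{thm:functioncount} and~\ref{thm:catalan}, and using both the freeness of $M_{S_{\mathbb{Q},\mathfrak{m}}}$ (established en route to Theorem~\ref{thm:catalan}) and the identity $\deg(f\circ g)=\deg(f)\deg(g)$, the counting problem is controlled by the Dirichlet series
\[
F(s)\;=\;\sum_{f\in M_{S_{\mathbb{Q},\mathfrak{m}}}}(\deg f)^{-s}\;=\;\frac{1}{1-L(s)},\qquad L(s):=\sum_{q\in\mathfrak{m}}q^{-s}.
\]
Routine asymptotic analysis of compositions with product of degrees bounded by $\log B/\log H(P)$ then identifies the growth exponent as the unique positive real $s_0$ satisfying $L(s_0)=1$.

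Finally, I bracket $s_0$ inside $[0.60839,\,0.60872]$ by showing $L(0.60839)>1$ and $L(0.60872)<1$, which suffices since $L$ is continuous and strictly decreasing. For the lower bound, evaluating the partial sum over the first seven Mersenne primes $3,7,31,127,8191,131071,524287$ at $s=0.60839$ already exceeds~$1$, and the remaining (nonnegative) terms only help. For the upper bound, summing over the first ten known Mersenne primes (prime exponents up to~$P_0=89$) at $s=0.60872$ gives a partial sum strictly less than~$1$, and the tail is controlled by the crude prime-indexed majorant
\[
\sum_{\substack{q=2^p-1\in\mathfrak{m}\\ p>P_0}}q^{-s}\;\leq\;\sum_{p>P_0}(2^p-1)^{-s}\;\leq\;\frac{2^{-P_0 s}}{1-2^{-s}},
\]
which at $s=0.60872$ and $P_0=89$ is of order $10^{-16}$, far smaller than the gap between the partial sum and $1$. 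Feeding these bounds into Theorem~\ref{thm:catalan} (and absorbing the $\epsilon$-cushion into the bracket width $0.60872-0.60839$) yields the corollary.

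The main obstacle is purely quantitative: because the infinitude of Mersenne primes is open, the upper bound $L(0.60872)<1$ must rely on the crude prime-indexed majorant above, so the truncation length $P_0$ and the $\epsilon$-cushion coming from Theorem~\ref{thm:catalan} must be chosen carefully enough that the explicit bounds fit inside the narrow window $[0.60839,\,0.60872]$.
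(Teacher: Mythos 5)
Your proposal is correct and reaches the same numerical bracket $[0.60839,\,0.60872]$, but via a genuinely different route for the explicit part of the argument. Steps one and two (verifying the hypotheses of Theorem~\ref{thm:catalan} and invoking it) agree with the paper, though the paper establishes uniform log-discreteness via the cruder observation $(2^{x+1}-1)/(2^x-1)>1.5$, giving $\delta=\log 1.5$. Where you diverge is in extracting explicit exponents: the paper goes through the rational-approximation machinery of Remark~\ref{rem:explicit}, bracketing $b_{2,\delta}\leq b_{2,\delta,N}$ and $b_{1,\delta}\leq b_{1,\delta,N}$ via the cutoff rational functions $G_{i,\delta,N}$ and then deferring the root-finding to \texttt{Magma} (feeding in the first nine Mersenne primes plus the $27$-digit bound on the tenth), whereas you work directly with the Dirichlet series $L(s)=\sum_{q\in\mathfrak{m}}q^{-s}$ and bracket its abscissa $s_0$ by hand, estimating a finite truncation and the geometric tail $\sum_{p>89}(2^p-1)^{-s}\leq 2^{-89s}/(1-2^{-s})$. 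Your approach is more self-contained and arguably cleaner; the price you pay is that the link between the theorem's exponent $b$ and $s_0$ is glossed over by the phrase ``absorbing the $\epsilon$-cushion.'' To make that step airtight one should observe, from the construction in Lemma~\ref{lem:diophantineapprox} (with $n_{t,\delta}/u_\delta<\log q<m_{t,\delta}/u_\delta$), that the paper's exponents satisfy $b_{2,\delta}<s_0<b_{1,\delta}$, which together with the convergence $b_{1,\delta}-b_{2,\delta}\to 0$ forces both into $(0.60839,\,0.60872)$ for $\delta$ sufficiently small; without this monotone sandwiching one only knows the gap shrinks, not where it lands.
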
 
At present, there are only 51 known Mersenne primes, and it is unknown whether or not there are infinitely many such primes - there are, however, probabilistic heuristics suggesting that there are infinitely many \cite[\S1.3.1]{Pomerance}. With this in mind, it is (to the author at least) amusing that we can give a reasonably good estimate for the number of points of bounded height in the orbits in Corollary \ref{cor:amusing} while remaining agnostic on the infinitude of Mersenne primes, an age-old question.       
\begin{remark} 
Certainly there exist many \emph{provably} infinite, uniformly log-discrete sets of primes (where Theorem \ref{thm:catalan} applies). To construct explicit examples, fix any prime $q$ and any real number $\delta>0$. Now let $q_0=q$ and define  
\[q_{n+1}=\min\big\{q\in\mathbb{N}: \text{$q$ is prime and $\log(q)>\log(q_n)+\delta$}\big\}.\] 
recursively. Then it is easy to check that $\mathfrak{q}(q_0,\delta)=\bigcup_{n=0}^\infty\{q_n\}$ is an infinite and uniformly log-discrete set of primes. However, we prefer to display the Mersenne primes as our chief example (even though we cannot be sure if they are infinite or not), due to their prominent place in the history of arithmetic.    
\end{remark} 

\section{Counting functions in semigroups}\label{sec:countingfunctions}
Similar to the case of semigroups acting on projective space \cite[Section 2]{Me:monoid}, the overall strategy used to prove Theorem \ref{thm:functioncount} is the following. First, aided by a suitable generalization of Tate's Telescoping Lemma, we relate the functions $f\in M_S$ yielding a bounded height relation $h(f(P))\leq B$ to the functions having bounded $\log$ degree (or more precisely bounded $\log$ $\eta$-degree, where $\eta$ is the common eigendivisor class in Definition \ref{def:htcontrolled,uniformdiscrete} above). Then, we approximate the number of functions of bounded log degree by the number of some related, restricted integer compositions (a standard combinatorial object); to do this we must choose suitable rational approximations for each $\log(d_\phi)$ and $\phi\in S$. Next we count restricted integer compositions by analyzing the dominant poles of some associated generating functions. Finally, we show that our estimates for the number of functions of bounded $\log$ degree (coming from integer compositions) become better and better as we refine our rational approximations for the $\log(d_\phi)$. 

\begin{remark} However, there is some added subtlety that arrises in this more general setting (i.e., infinitely generated polarized semigroups), largely coming from the fact that the aforementioned combinatorial generating functions are now meromorphic (as opposed to simply rational, as in the case when $S$ is finite \cite[\S 2]{Me:monoid}). 
\end{remark} 

With this sketch in place, we begin with a few auxiliary results, starting with a generalization of Tate's Telescoping Lemma. In what follows, given a function $f=\phi_1\circ\dots\circ\phi_m$ in $M_S$ for some $\phi_i\in S$, then we write $\deg_\eta(f)$ for the quantity $\prod_{i=1}^m d_{\phi_i}$ satisfying $f^*(\eta)=\deg_\eta(f)\eta$; here $\eta\in\Pic(V)\otimes\mathbb{R}$ is the common eigendivisor class given in Definition \ref{def:htcontrolled,uniformdiscrete} for height controlled sets. Then we have the following tool for relating heights and degrees.      
\begin{lemma}{\label{lem:tate}} Let $S$ be a height controlled set of endomorphisms of $V$, and let $d_S:=\inf_{\phi\in S}\{d_\phi\}$ and $C_S:=\sup_{\phi\in S}C(V,\eta,\phi)$ be the corresponding height controlling constants. Then 
\[\bigg|\frac{h_\eta(f(Q))}{\deg_\eta(f)}-h_\eta(Q)\bigg|\leq \frac{C_S}{d_S-1}\] 
for all $f\in M_S$ and all $Q\in V(\overline{K})$. 
\end{lemma}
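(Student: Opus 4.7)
The plan is to adapt the classical Tate telescoping argument to this semigroup setting, where the novelty is only that we must ensure the constants appearing in the telescoping sum are controlled uniformly in $f$ by the global quantities $d_S$ and $C_S$ (this is where the defining properties of height-controlled sets enter). Nothing more sophisticated should be required; the statement reduces to summing a geometric series.

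First I would fix a factorization $f = g_1\circ g_2\circ\cdots\circ g_m$ with each $g_j\in S$, and introduce the partial orbit $P_0 = Q$, $P_k = g_{m-k+1}(P_{k-1})$ for $1\le k\le m$, so that $P_m = f(Q)$. I would likewise let $D_0 = 1$ and $D_k = \prod_{j=m-k+1}^{m} d_{g_j}$, so that $D_m = \deg_\eta(f)$ and $D_k = d_{g_{m-k+1}}\cdot D_{k-1}$. Then the difference to be bounded telescopes as
\[
\frac{h_\eta(f(Q))}{\deg_\eta(f)} - h_\eta(Q) = \sum_{k=1}^{m}\left(\frac{h_\eta(P_k)}{D_k} - \frac{h_\eta(P_{k-1})}{D_{k-1}}\right) = \sum_{k=1}^{m}\frac{1}{D_k}\bigl(h_\eta(P_k) - d_{g_{m-k+1}}\,h_\eta(P_{k-1})\bigr).
\]

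Next, I would apply condition (2) in Definition \ref{def:htcontrolled,uniformdiscrete}: for each $k$, since $P_k = g_{m-k+1}(P_{k-1})$ and $g_{m-k+1}^*(\eta) = d_{g_{m-k+1}}\eta$, we have
\[
\bigl|h_\eta(P_k) - d_{g_{m-k+1}}\,h_\eta(P_{k-1})\bigr| \le C(V,\eta,g_{m-k+1}) \le C_S.
\]
Combining with $D_k \ge d_S^{\,k}$ (which follows from $d_{g_j}\ge d_S$ for each $j$), the triangle inequality gives
\[
\left|\frac{h_\eta(f(Q))}{\deg_\eta(f)} - h_\eta(Q)\right| \le C_S\sum_{k=1}^{m}\frac{1}{D_k} \le C_S\sum_{k=1}^{m}\frac{1}{d_S^{\,k}} \le \frac{C_S}{d_S - 1},
\]
where in the last step I use that $d_S > 1$ to sum the geometric tail. (If $d_S = 1$ the conclusion is vacuous, since the right-hand side is infinite.)

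The only potential subtlety is that $f\in M_S$ has multiple factorizations in general, so one must verify that the telescoping and the bound do not depend on the chosen factorization; but this is immediate because the final inequality is expressed only in terms of $\deg_\eta(f)$, $C_S$, and $d_S$, none of which involve the factorization. Thus there is no real obstacle, and the proof is essentially the standard telescoping argument with $d_S$ and $C_S$ playing the role of the single-map constants $d_\phi$ and $C(V,\eta,\phi)$ in \cite{Call-Silverman}.
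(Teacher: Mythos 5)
Your proof is correct and is the standard Tate telescoping argument; the paper itself does not give a proof but cites \cite[Lemma 2.1]{Me:stochastic}, which carries out essentially this same computation. The telescoping decomposition, the uniform bound $\lvert h_\eta(P_k) - d_{g_{m-k+1}} h_\eta(P_{k-1})\rvert \le C_S$ drawn from the height-controlled hypothesis, the estimate $D_k \ge d_S^k$, and the geometric-series bound $\sum_{k\ge 1} d_S^{-k} \le (d_S-1)^{-1}$ are exactly the expected steps, so there is nothing to criticize beyond one cosmetic point: the inequality $\lvert h_\eta(\phi(P)) - d_\phi h_\eta(P)\rvert \le C(V,\eta,\phi)$ is the defining property of the constant $C(V,\eta,\phi)$ stated in the paragraph preceding Definition \ref{def:htcontrolled,uniformdiscrete}, while condition (2) of the definition is only the assertion that the supremum $C_S$ of these constants is finite; you invoke "condition (2)" where you really mean both facts together. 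This does not affect correctness.
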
 
See \cite[Lemma 2.1]{Me:stochastic} for a proof of this fact. Having related heights and degrees, we may reduce counting $\#\{f\in M_S\,:\, h_\eta(f(P))\leq B\}$ to counting functions of bounded log degree. Moreover if $M_S$ is free and $d_\phi\neq d_\psi$ for all $\phi\neq\psi$ in $S$, then this is equivalent to counting finite sequences $\big(\log(d_{\phi_1}),\dots, \log(d_{\phi_m})\big)$ such that $\sum_{i=1}^m\log(d_{\phi_i})\leq B$. This type of problem is typical in combinatorics, except that the summands $\log(d_{\phi_i})$ are not positive integers. Nevertheless, by approximating the $\log(d_{\phi_i})$ by suitable rational numbers, we can estimate the number of sequences in the $\log(d_{\phi_i})$ with bounded sum by the number of some associated sequences of positive integers with bounded sum (for which there are combinatorial tools). 

To discuss the relevant combinatorial tools, we fix some notation. Let $T$ be a set of positive real numbers, and let $\Seq(T)$ be the set of \emph{finite sequences} of elements of $T$. Then for any sequence $\omega=(t_1,\dots, t_m)$ with $t_i\in T$, we define 
\begin{equation}\label{eq:length}
\qquad\qquad|\omega|_T=\sum_{i=1}^mt_i,\qquad \text{$\omega=(t_1,\dots, t_m)\in\Seq(T)$.}
\end{equation}      
Of particular historical interest is the case when $T\subseteq\mathbb{N}$, especially as it relates to the problem of counting \emph{restricted integer compositions}: for a fixed $n\in\mathbb{N}$, count the number of $\omega\in\Seq(T)$ satisfying $|\omega|_T=n$. Luckily this is a standard problem in analytic combinatorics \cite[\S I.3.1]{analytic-combinatorics} and can be attacked (asymptotically) via generating functions and an analysis of their associated poles and residues.       
\begin{theorem}\label{thm:generatingfunction} Let $T$ be a set of positive integers with at least two elements. For $n>0$ let 
\[a_{T,n}:=\#\{\omega\in\textup{Seq}(T)\,:\,|\omega|_T=n\}.\]
Then the ordinary generating function for the $a_{T,n}$ is \vspace{.1cm}
\[\mathcal{F}_T(z):=\sum_{n}a_{T,n}\,z^{n}=\frac{1}{1-(\,\sum_{t\in T}z^{t}\,)}.\]
Moreover, if $\gcd(T)=1$, then 
\[a_{T,n}=\frac{1}{\alpha\,G_T'(\alpha)}\,\alpha^{-n}\big(1+O(A^n)\big),\]
where $G_T:=\sum_{t\in T}z^{t}$, $0<\alpha<1$ is the unique solution to $G_T(\alpha)=1$, and $0<A<1$.    
\end{theorem}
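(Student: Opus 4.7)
My plan starts by establishing the first assertion via a standard symbolic-method computation. Partitioning a non-empty sequence $\omega\in\Seq(T)$ by its first element $t\in T$ yields the functional equation $\mathcal{F}_T(z)=1+G_T(z)\mathcal{F}_T(z)$, whence $\mathcal{F}_T(z)=1/(1-G_T(z))$. Equivalently, sequences of length $m$ with sum $n$ are enumerated by $[z^n]G_T(z)^m$, so summing over $m\geq 0$ gives the same formula. When $T$ is infinite I would observe that $G_T$ converges absolutely on $|z|<1$ by comparison with $\sum_{n\geq 1}z^n$, so $\mathcal{F}_T$ is meromorphic throughout the open unit disk.

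For the asymptotic, the plan is standard singularity analysis for a meromorphic generating function whose dominant singularity is a simple pole. \emph{First}, I would establish the existence and simplicity of $\alpha$: on $[0,1)$ the function $G_T$ is continuous and strictly increasing, with $G_T(0)=0$ and $\lim_{z\to 1^-}G_T(z)\geq \#T\geq 2$ (or $=\infty$ if $T$ is infinite), so a unique $\alpha\in(0,1)$ satisfies $G_T(\alpha)=1$, and $G_T'(\alpha)>0$. \emph{Second}, I would show $\alpha$ is the unique singularity of $\mathcal{F}_T$ on the closed disk $|z|\leq\alpha$: for $|z|<\alpha$ one has $|G_T(z)|\leq G_T(|z|)<1$, and for $z=\alpha e^{i\theta}$ with $\theta\not\equiv 0\pmod{2\pi}$, equality in the triangle inequality $|G_T(z)|\leq \sum_{t\in T}\alpha^t=1$ would force $e^{it\theta}$ to be constant in $t\in T$; the hypothesis $\gcd(T)=1$, applied to a finite subfamily of $T$ with gcd one, then forces $\theta\equiv 0\pmod{2\pi}$, a contradiction. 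Thus $1-G_T$ has $\alpha$ as its only zero on $|z|\leq\alpha$, and this zero is simple since $G_T'(\alpha)\neq 0$.

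\emph{Third}, I would extract the asymptotic by a contour shift. Isolation of zeros of the meromorphic function $1-G_T$ in $|z|<1$, combined with the previous step, yields some $\alpha'\in(\alpha,1)$ such that $\mathcal{F}_T$ is analytic on $|z|\leq\alpha'$ apart from the simple pole at $\alpha$, whose residue is $-1/G_T'(\alpha)$. Cauchy's integral formula together with the residue theorem then produce
\[
a_{T,n}=\frac{1}{\alpha\,G_T'(\alpha)}\,\alpha^{-n}+\frac{1}{2\pi i}\oint_{|z|=\alpha'}\frac{\mathcal{F}_T(z)}{z^{n+1}}\,dz,
\]
and bounding the integrand in modulus on $|z|=\alpha'$ gives a tail of order $O((\alpha')^{-n})$. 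Setting $A=\alpha/\alpha'\in(0,1)$ then yields the claimed expansion. The principal obstacle I anticipate is the second step, which is precisely where $\gcd(T)=1$ enters --- without it, roots of unity would yield further singularities of $\mathcal{F}_T$ on $|z|=\alpha$, and the asymptotic would acquire oscillating contributions. A lesser subtlety when $T$ is infinite is that one must verify $G_T$ is analytic past $|z|=\alpha$, but this is automatic since $\alpha<1$ and $G_T$ is analytic throughout the open unit disk; everything else is routine analytic combinatorics in the style of \cite[\S I.3.1]{analytic-combinatorics}.
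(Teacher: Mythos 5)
The paper itself does not prove this result; it simply cites \cite[Theorem V.1 and Example V.2]{analytic-combinatorics}. Your proposal is essentially a self-contained reproduction of that cited theorem in the present setting (sequence construction, unique dominant simple pole, residue transfer), which is a perfectly reasonable and arguably more illuminating route, since it makes explicit where the hypothesis $\gcd(T)=1$ enters.

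There is, however, a genuine gap in your second step, where you rule out zeros of $1-G_T$ on $|z|=\alpha$ other than $\alpha$ itself. You argue: equality in $|G_T(z)|\leq\sum_t\alpha^t=1$ forces $e^{it\theta}$ to be constant in $t\in T$, and then $\gcd(T)=1$ forces $\theta\equiv 0\pmod{2\pi}$. That last implication is false as stated. Take $T=\{1,3\}$, so $\gcd(T)=1$, and $\theta=\pi$: then $e^{it\theta}=-1$ for both $t\in T$, hence constant, yet $\theta\not\equiv 0$. The issue is that constancy of $e^{it\theta}$ over $T$ only controls differences $t-t'$, and $\gcd(T-T)$ can be strictly larger than $\gcd(T)$. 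What rescues the argument is using the full equation $G_T(z)=1$, not merely $|G_T(z)|=1$. Since $1=\sum_t\alpha^t$ is a positive real, taking real parts in $\sum_t\alpha^t e^{it\theta}=1$ gives $\sum_t\alpha^t(1-\cos(t\theta))=0$; each summand is nonnegative, so $\cos(t\theta)=1$, i.e.\ $e^{it\theta}=1$ for \emph{every} $t\in T$. Only now does $\gcd(T)=1$ apply cleanly: writing $1=\sum_i c_i t_i$ for a finite subfamily of $T$ with gcd one, one gets $e^{i\theta}=\prod_i(e^{it_i\theta})^{c_i}=1$, hence $\theta\equiv 0\pmod{2\pi}$. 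With this one-sentence repair in place, the rest of your argument --- existence and simplicity of $\alpha$, the contour shift to $|z|=\alpha'$, the residue computation giving the main term $\alpha^{-n}/(\alpha G_T'(\alpha))$, and the $O((\alpha')^{-n})$ tail --- is correct and matches what the cited Flajolet--Sedgewick theorem delivers.
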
 
\begin{proof} See \cite[Theorem V.1]{analytic-combinatorics} and \cite[Example V.2]{analytic-combinatorics} on page 297 for a justification of why Theorem V.1 applies in this case.  
\end{proof}
\begin{remark}\label{rem:bounds} Here $\gcd(T)=1$ means that not all elements in $T\subseteq\mathbb{N}$ are a multiple of some common divisor $d>1$. In particular if $\gcd(T)=1$, if $\beta=\alpha^{-1}$, and if $0<\epsilon<1$, then \vspace{.15cm}  
\[
\frac{(1-\epsilon)\beta}{(\beta-1)\,G_T'(\frac{1}{\beta})}\beta^{R}-O_\epsilon(1)\leq\#\{\omega\in\Seq(T)\,:\,|\omega|_T\leq R\}\leq\frac{(1+\epsilon)\beta^3}{(\beta-1)\,G_T'(\frac{1}{\beta})}\beta^{R}+O_\epsilon(1) \vspace{.15cm}  \]
for all $R$ sufficiently large; here we sum the expression for $a_{T,n}$ in Theorem \ref{thm:generatingfunction} over $n\leq\lceil R\rceil$ (for the upper bound) and over $n\leq\lfloor R\rfloor$ (for the lower bound).  
\end{remark}  
Next, we prove a technical lemma that allows us to choose suitable rational approximations for each $\log(d_{\phi})$ and $\phi\in S$ (in order to use Theorem \ref{thm:generatingfunction} above); compare to \cite[Lemma 2.6]{Me:monoid}. 
\begin{lemma}\label{lem:diophantineapprox} Let $T$ be a positive set of real numbers with at least $2$ elements and let $\delta>0$. Then for all $t\in T$ there are integers $n_{t,\delta},m_{t,\delta},u_\delta\in\mathbb{N}$ depending of $\delta$ such that: \vspace{.1cm} 
\begin{enumerate} 
\item[\textup{(1)}] $t-\delta<\frac{n_{t,\delta}}{u_\delta}<t<\frac{m_{t,\delta}}{u_\delta}<t+\delta$.\vspace{.1cm}  
\item[\textup{(2)}] $\gcd(T_{1,\delta})=1=\gcd(T_{2,\delta})$ where $T_{1,\delta}=\{n_{t,\delta}:t\in T\}$ and $T_{2,\delta}=\{m_{t,\delta}:t\in T\}$.   \vspace{.1cm} 
\end{enumerate} 
Moreover if $T$ is uniformly discrete, then we can assume $n_{t,\delta}\neq n_{t',\delta}$ and $m_{t,\delta}\neq m_{t',\delta}$ for all distinct $t,t'\in T$ by choosing $\delta$ sufficiently small.    
\end{lemma}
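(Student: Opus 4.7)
The plan is to construct the approximations by a two-stage argument, reducing the (potentially infinite) gcd condition (2) to a single coprimality check. Since $\gcd(T_{1,\delta})$ divides the gcd of any two of its elements, it is enough to arrange that the numerators attached to some fixed pair $t_1 \neq t_2$ in $T$ are already coprime; the same reduction works for $T_{2,\delta}$.

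Fix distinct $t_1 < t_2$ in $T$. I would choose two primes $b_1 < b_2$, with $b_1$ sufficiently large (in particular $b_1 > 2/\delta$) and $b_2 > (t_1 b_1 + 2)/\delta$, and set $u_\delta := b_1 b_2$. In the interval $(t_1 b_1 - \delta b_1, t_1 b_1)$, of length $\delta b_1 > 2$, at most one integer is divisible by the prime $b_1$, so I pick $a_1$ to be any other integer in it; then $\gcd(a_1, b_1) = 1$. The interval $(t_2 b_2 - \delta b_2, t_2 b_2)$ has length $\delta b_2 > t_1 b_1 \geq a_1$, so every residue modulo $a_1$ is realized among its integers; since at least $\phi(a_1) \geq 1$ of those integers are coprime to $a_1$, and at most one of them is a multiple of $b_2$, for $b_2$ large I can choose $a_2$ in this interval with $\gcd(a_2, a_1) = 1$ and $b_2 \nmid a_2$. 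Set $n_{t_1,\delta} := a_1 b_2$ and $n_{t_2,\delta} := a_2 b_1$. Then $n_{t_i,\delta}/u_\delta = a_i/b_i \in (t_i - \delta, t_i)$, and a short prime-by-prime analysis using the coprimalities $\gcd(a_i, b_i) = \gcd(b_1, b_2) = \gcd(a_1, a_2) = 1$ yields $\gcd(n_{t_1,\delta}, n_{t_2,\delta}) = 1$.

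For each remaining $t \in T$ I take $n_{t,\delta}$ to be any integer in the interval $(t u_\delta - \delta u_\delta, t u_\delta)$ (for example $\lfloor t u_\delta \rfloor$, decremented by $1$ if $t u_\delta$ happens to be an integer), which is nonempty since $\delta u_\delta \gg 1$. Condition (1) then holds for every $t$, and condition (2) follows from the divisibility $\gcd(T_{1,\delta}) \mid \gcd(n_{t_1,\delta}, n_{t_2,\delta}) = 1$. The upper approximations $m_{t,\delta}$ are produced in exactly the same way, starting from the intervals $(t u_\delta, t u_\delta + \delta u_\delta)$ and repeating the coprimality setup on the same pair $t_1, t_2$.

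For the moreover clause, suppose $T$ is uniformly discrete with constant $\delta_T$, and take $\delta < \delta_T/2$. Then for distinct $t, t' \in T$ the open intervals $(t - \delta, t)$ and $(t' - \delta, t')$ are disjoint, so the rationals $n_{t,\delta}/u_\delta$ and $n_{t',\delta}/u_\delta$---which share denominator $u_\delta$---cannot coincide, forcing $n_{t,\delta} \neq n_{t',\delta}$; the same reasoning applies to the $m_{t,\delta}$. The main obstacle is condition (2) in the infinite setting, and the key idea unlocking it is the reduction to a single coprime pair together with the elementary counting argument producing such a pair in an interval of length at least $a_1$.
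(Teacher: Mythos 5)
Your argument is correct and rests on the same crucial reduction as the paper's, namely that $\gcd(T_{1,\delta})$ divides $\gcd(n_{t_1,\delta},n_{t_2,\delta})$ for any fixed pair $t_1\neq t_2$, so one only needs to force a single pair of numerators to be coprime. But the mechanism you use to achieve that coprimality is genuinely different. The paper starts from \emph{arbitrary} integers satisfying (1), then deforms them: multiply every $n_{t,\delta}$, $m_{t,\delta}$, $u_\delta$ by $v=(n_{t_2,\delta}\cdot m_{t_2,\delta})^r$ and add $1$ to the two numerators indexed by $t_1$. A short divisibility chase shows any common prime of $n_{t_1,\delta}'$ and $n_{t_2,\delta}'$ must divide $v$, hence divides the shift $1$; and taking $r$ large makes the perturbation $1/(u_\delta v)$ negligible so (1) survives. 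Your construction instead builds $u_\delta=b_1 b_2$ from two primes chosen large relative to $\delta$, realizes $n_{t_i,\delta}/u_\delta$ as $a_i/b_i$, and engineers $\gcd(a_1,a_2)=\gcd(a_i,b_i)=1$, $b_2\nmid a_2$ by interval-and-residue-class counting, from which $\gcd(a_1b_2,a_2b_1)=1$ follows. The paper's deformation trick is cleaner in that it cleanly decouples the rational-approximation step from the gcd-fixing step and requires almost no number theory; your version is more explicit and concrete, at the cost of a longer counting argument and one small inaccuracy in exposition (the statement that an interval of length $\delta b_1>2$ contains ``at most one'' multiple of $b_1$ is only literally true when $\delta<1$; this is harmless since one may WLOG shrink $\delta$, or observe directly that an interval of length $>2$ always contains a non-multiple of the prime $b_1$, but it should be said). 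The moreover clause is handled the same way in both proofs, via the disjointness of the $\delta$-neighborhoods when $\delta$ is less than half the uniform-discreteness constant.
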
 
\begin{remark}\label{rem:delta} Recall that a set of real numbers is uniformly discrete if there exists a positive constant $\delta_T$ such that $|t-t'|>\delta_T$ for all distinct $t,t'\in T$. 
\end{remark} 
\begin{proof} Clearly integers $n_{t,\delta},m_{t,\delta},u_\delta\in\mathbb{N}$ satisfying condition (1) exists for all $\delta>0$. Therefore, to find integers satisfying both (1) and (2), we choose integers satisfying (1) and deform them to ensure that both conditions hold. Specifically, fix an integer $r>0$ and two arbitrary elements $t_1,t_2\in T$. Then define $v=(n_{t_2,\delta}\cdot m_{t_2,\delta})^r$ and \vspace{.1cm} 
\[n_{t_1,\delta}'= n_{t_1,\delta}\cdot v+1,\;\;\;\;n_{t,\delta}'= n_{t,\delta}\cdot v,\;\;\;m_{t_1,\delta}'= m_{t_1,\delta}\cdot v+1,\;\;\;\;m_{t,\delta}'= m_{t,\delta}\cdot v,\;\;\;\; u_\delta'=u_{\delta}\cdot v \vspace{.1cm} 
\]
for all $t\neq t_1$. Note that $m_{t_2,\delta}>n_{t_2,\delta}$ by condition (1) so that $v\geq2$. Now, we check that \vspace{.1cm} 
\[\frac{n_{t_1,\delta}'}{u_\delta'}=\frac{n_{t_1,\delta}}{u_\delta}+\frac{1}{u_\delta v},\;\;\;\;\frac{m_{t_1,\delta}'}{u_\delta'}=\frac{m_{t_1,\delta}}{u_\delta}+\frac{1}{u_\delta v},\;\;\;\; \frac{n_{t,\delta}'}{u_\delta'}=\frac{n_{t,\delta}}{u_\delta},\;\;\;\;\; \frac{m_{t,\delta}'}{u_\delta'}=\frac{m_{t,\delta}}{u_\delta} \vspace{.1cm}\]  
for all $t\neq t_1$. In particular, the new integers $n_{t,\delta}',m_{t,\delta}',u_\delta'\in\mathbb{N}$ also satisfy condition (1) for all $r$ sufficiently large. On the other hand, we will show that $\gcd(n_{t_1,\delta}',n_{t_2,\delta}')=1$ and $\gcd(m_{t_1,\delta}',m_{t_2,\delta}')=1$. To see this, suppose $p$ is a prime such that $p\big\vert n_{t_1,\delta}'$ and $p\big\vert n_{t_2,\delta}'$. Then since $p\big\vert n_{t_2,\delta}'$, either $p\big\vert n_{t_2,\delta}$ or $p\big\vert v$. But if $p\big\vert n_{t_2,\delta}$, then $p\big\vert v$ also by definition of $v$. Hence, in either case $p\big\vert v$. Therefore, both $p\big\vert n_{t_1,\delta}\cdot v$ and $p\big\vert n_{t_1,\delta}'$. However, it then follows that $p\big\vert (n_{t_1,\delta}'-n_{t_1,\delta}\cdot v)=1$, a contradiction. In particular, we deduce that $\gcd(n_{t_1,\delta}',n_{t_2,\delta}')=1$ as claimed. The same argument shows that $\gcd(m_{t_1,\delta}',m_{t_2,\delta}')=1$. In particular, the new integers $n_{t,\delta}',m_{t,\delta}',u_\delta'\in\mathbb{N}$ satisfy both conditions (1) and (2) for all $r$ sufficiently large. Moreover, it is straightforward to check that $n_{t,\delta}\neq n_{t',\delta}$ and $m_{t,\delta}\neq m_{t',\delta}$ for all $0<\delta<\delta_T$ by condition (1) alone; here $\delta_T$ is the constant from Remark \ref{rem:delta}, guaranteed to exist since $T$ is uniformly discrete.                
\end{proof} 
With these preliminaries in place, we are now ready to prove our first main result, a count on the number of functions $f\in M_S$ yielding a bounded height relation $H_\eta(f(P))\leq B$ when $M_S$ is free. This is statement (2) of Theorem \ref{thm:functioncount} from the introduction.  
\begin{theorem}\label{thm:functioncount'} Let $K$ be a global field, let $V$ be a projective variety over $K$, and let $S$ be a height controlled and uniformly log-discrete set of endomorphisms on $V$ with respect to a divisor class $\eta\in\Pic(V)\otimes\mathbb{R}$. If $M_S$ is free (non-abelian), then for all $\epsilon>0$ there is a positive constant $b=b(S,\eta,\epsilon)$ and a constant $B_{S,\eta}$ such that \vspace{.15cm} 
\begin{equation*}
(\log B)^{b}\ll\#\big\{f\in M_S\,:\, H_\eta(f(P))\leq B\big\}\ll (\log B)^{b+\epsilon}\vspace{.15cm}
\end{equation*} 
for all $P\in V$ with $H_\eta(P)>B_{S,\eta}$. Here the implicit constants depend on $P$, $S$, $\eta$ and $\epsilon$. 
\end{theorem}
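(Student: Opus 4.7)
The plan is to follow the four-step blueprint sketched at the opening of Section \ref{sec:countingfunctions}, with the added care that infinitely many generators turn the relevant combinatorial generating functions from rational into meromorphic ones.

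First I would reduce a height bound to an $\eta$-degree bound via Lemma \ref{lem:tate}. Choosing $B_{S,\eta}$ with $\log B_{S,\eta}>C_S/(d_S-1)$ ensures that $c_P:=h_\eta(P)-C_S/(d_S-1)>0$ whenever $H_\eta(P)>B_{S,\eta}$, and Lemma \ref{lem:tate} then yields
\[ c_P\cdot\deg_\eta(f)\;\le\; h_\eta(f(P))\;\le\; \deg_\eta(f)\left(h_\eta(P)+\frac{C_S}{d_S-1}\right) \]
for every $f\in M_S$. So $H_\eta(f(P))\le B$ is equivalent, up to a $P$-dependent additive shift on $\log\deg_\eta(f)$, to $\log\deg_\eta(f)\le R$ where $R:=\log\log B+O_P(1)$. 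Since $M_S$ is free non-abelian and the map $\phi\mapsto\log d_\phi$ is injective (forced by uniform discreteness of $T:=\{\log d_\phi:\phi\in S\}$), we obtain a bijection between $M_S$ and $\Seq(T)$ under which $\log\deg_\eta(f)=|\omega|_T$. The task reduces to bounding $N(R):=\#\{\omega\in\Seq(T):|\omega|_T\le R\}$ from both sides.

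For the combinatorial core I would apply Lemma \ref{lem:diophantineapprox} with some $0<\delta<\delta_T$ to obtain integers $n_{t,\delta},m_{t,\delta},u_\delta$ bracketing each $t\in T$, with $\gcd(T_{i,\delta})=1$ and $t\mapsto n_{t,\delta}$, $t\mapsto m_{t,\delta}$ both injective. Summing $n_{t,\delta}/u_\delta<t<m_{t,\delta}/u_\delta$ termwise along a word gives
\[ \#\{\omega'\in\Seq(T_{2,\delta}):|\omega'|_{T_{2,\delta}}\le u_\delta R\}\;\le\;N(R)\;\le\;\#\{\omega'\in\Seq(T_{1,\delta}):|\omega'|_{T_{1,\delta}}\le u_\delta R\}, \]
and Remark \ref{rem:bounds} converts both outer counts into $\asymp\beta_{i,\delta}^{u_\delta R}$, where $\beta_{i,\delta}=\alpha_{i,\delta}^{-1}$ and $\alpha_{i,\delta}\in(0,1)$ is the unique root of $G_{T_{i,\delta}}(z)=1$. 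Substituting $R=\log\log B+O_P(1)$ produces $(\log B)^{u_\delta\log\beta_{2,\delta}}\ll N(R)\ll(\log B)^{u_\delta\log\beta_{1,\delta}}$.

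The final step is to match the exponents as $\delta\to 0$. Rescaling $r=\alpha_{i,\delta}^{u_\delta}$, the defining equation $G_{T_{i,\delta}}(\alpha_{i,\delta})=1$ becomes $\sum_{t\in T}r^{n_{t,\delta}/u_\delta}=1$ (or the analogue with $m_{t,\delta}/u_\delta$); as $\delta\to 0$ the exponents tend to $t$. The series $G_T(r):=\sum_{t\in T}r^t$ converges on $(0,1)$ because $T$, being uniformly discrete, has only finitely many elements below any fixed bound and is unbounded when infinite; moreover $G_T$ is continuous and strictly increasing with $G_T(0)=0$ and $G_T(r)\to|T|\ge 2$ as $r\to 1^-$, so it has a unique root $r^*\in(0,1)$. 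Pointwise convergence of the approximating series together with monotonicity forces $\alpha_{i,\delta}^{u_\delta}\to r^*$, hence $u_\delta\log\beta_{i,\delta}\to b^*:=-\log r^*>0$. Given $\epsilon>0$, choosing $\delta$ small enough that $u_\delta\log\beta_{1,\delta}-u_\delta\log\beta_{2,\delta}<\epsilon$ and setting $b:=u_\delta\log\beta_{2,\delta}$ delivers the claimed two-sided bound. The main obstacle I anticipate is legitimately invoking Remark \ref{rem:bounds} when $T_{i,\delta}$ is infinite: the generating function $\mathcal{F}_{T_{i,\delta}}(z)=1/(1-G_{T_{i,\delta}}(z))$ is then only meromorphic, so one must verify the dominant-pole hypotheses of \cite[Theorem V.1]{analytic-combinatorics}---in particular, that $\alpha_{i,\delta}$ is a simple, strictly dominant pole of $\mathcal{F}_{T_{i,\delta}}$ and that $\mathcal{F}_{T_{i,\delta}}$ admits meromorphic continuation to a suitable $\Delta$-domain. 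This is precisely the technical subtlety flagged in the remark opening Section \ref{sec:countingfunctions}, and is the genuinely new analytic point beyond the finite-generator setting of \cite{Me:monoid}.
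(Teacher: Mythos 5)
Your proposal is correct and follows the paper's argument verbatim for the first three stages (Tate telescoping to convert height bounds into $\eta$-degree bounds, the bijection $M_S\cong\Seq(T)$ from freeness, the sandwich via Lemma \ref{lem:diophantineapprox} and Remark \ref{rem:bounds}). The genuine departure is in the last stage. The paper bounds the gap $b_{1,\delta}-b_{2,\delta}$ directly: it first establishes the auxiliary lower bound $c_T^{1/n_{t_1,\delta}}\le\alpha_{1,\delta}$ (Lemma \ref{rootlbd}), then runs the Mean Value Theorem for $G_{2,\delta}$ and for $u_\delta\log(z)$ on $[\alpha_{1,\delta},\alpha_{2,\delta}]$ to obtain the explicit estimate in \eqref{smallF}, which visibly tends to $0$ with $\delta$. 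You instead identify the common limit: show $r_{i,\delta}:=\alpha_{i,\delta}^{u_\delta}\to r^*$, the unique root of $G_T(r)=\sum_{t\in T}r^t=1$ in $(0,1)$, so that both $b_{i,\delta}=-\log r_{i,\delta}$ converge to $b^*:=-\log r^*$. Your phrase ``pointwise convergence plus monotonicity'' is a little compressed; the clean version uses the two-sided pinch $G_T(r)<G_{T_{1,\delta}}(\alpha_{1,\delta}^{1})$-type inequalities, or, in the $r$-variable, $G_T(r)<\sum_t r^{n_{t,\delta}/u_\delta}<r^{-\delta}G_T(r)$ and $r^{\delta}G_T(r)<\sum_t r^{m_{t,\delta}/u_\delta}<G_T(r)$. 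Evaluating at a fixed $r^*\mp\epsilon$ and using $(r^*\mp\epsilon)^{\pm\delta}\to1$ as $\delta\to0$ forces $r^*-\epsilon<r_{1,\delta}<r^*<r_{2,\delta}<r^*+\epsilon$ for small $\delta$, which completes the argument without needing Lemma \ref{rootlbd} at all. What the paper's MVT route buys is quantitativity: \eqref{smallF} gives an explicit rate in $\delta$, feeding directly into Remark \ref{rem:explicit} and the numerical Mersenne bounds of Corollary \ref{cor:catalan}. What your route buys is conceptual clarity: it names the limiting exponent $b^*=-\log r^*$ (with $r^*$ the unique solution of $\sum_{t\in T}r^t=1$), which the paper never states explicitly, and it avoids Lemma \ref{rootlbd} entirely. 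Your flag about the meromorphic generating functions (justifying Theorem \ref{thm:generatingfunction} and Remark \ref{rem:bounds} for infinite $T_{i,\delta}$ via the dominant-pole hypotheses of \cite[Thm.~V.1]{analytic-combinatorics}) is at the same level of rigor as the paper, which likewise defers this to the cited source.
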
 
\begin{proof} Assume that $S$ is height controlled and uniformly log-discrete (with respect to a divisor class $\eta$) and that $M_S$ is free. In particular, if 
\[T=\{\log\deg_\eta(\phi):\phi\in S\}, \vspace{.1cm}\] 
then we have a bijection of sets $\chi: M_S\rightarrow \Seq(T)$ given by \vspace{.1cm} 
\begin{equation}\label{identification1} 
\;\;\chi(\theta_1\circ\dots\circ\theta_m)=\big(\log\deg_\eta(\theta_1),\dots,\log\deg_\eta(\theta_m)\big),\qquad\theta_i\in S. \vspace{.1cm} 
\end{equation} 
Moreover, since $\deg_\eta(F\circ G)=\deg_\eta(F)\deg_\eta(G)$ for all $F,G\in M_S$ by definition of $\deg_\eta(\cdot)$, we see from the identification in \eqref{identification1} that $\log\deg_\eta(f)=|\chi(f)|_T$ for all $f\in M_S$; here $|\cdot|_T$ is defined as in \eqref{eq:length}. Therefore,  \vspace{.1cm} 
\begin{equation}\label{identification2}
\#\{f\in M_S\,:\,\log\deg_\eta(f)\leq B\}=\#\{\omega\in \Seq(T)\,:\,|\omega|_T\leq B\} \vspace{.1cm} 
\end{equation}
holds for all $B$. Now for all $\delta>0$ and $t\in T$, we may choose $n_{t,\delta},m_{t,\delta}, u_\delta\in\mathbb{N}$ as in Lemma \ref{lem:diophantineapprox}. Moreover, we may assume that $\delta>0$ is sufficiently small to ensure that $n_{t,\delta}\neq n_{t',\delta}$ and $m_{t,\delta}\neq m_{t',\delta}$ for all distinct $t,t'\in T$. In particular, the map $\chi_1:\Seq(T)\rightarrow\Seq(T_{1,\delta})$ given by \[\chi_1\big((t_1,\dots,t_r)\big)=(n_{t_1,\delta}\,,\,\dots,\,n_{t_r,\delta})\] 
is injective (in-fact, bijective by definition). Moreover, part (1) of Lemma \ref{lem:diophantineapprox} implies that \vspace{.1cm} 
\[\qquad\frac{|\chi_1(\omega)|_{T_{1,\delta}}}{u_{\delta}}\leq |\omega|_{T}\qquad\text{for all $\omega\in \Seq(T)$.}\] 
Therefore, we deduce that \vspace{.1cm}   
\begin{equation}\label{identification3} 
\#\{\omega\in \Seq(T)\,:\,|\omega|_T\leq B\}\leq \#\{\omega_1\in \Seq(T_{1,\delta})\,:\,|\omega_1|_{T_{1,\delta}}\leq u_{\delta}B\}.  \vspace{.1cm}   
\end{equation} 
Similarly, the map $\chi_2:\Seq(T_{2,\delta})\rightarrow\Seq(T)$ given by  \vspace{.1cm} 
\[\chi_2\big((m_{t_1,\delta}\,,\,\dots,\,m_{t_r,\delta})\big)=(t_1,\dots,t_r)  \vspace{.1cm} \] 
is well-defined and injective, and hence  \vspace{.1cm} 
\begin{equation}\label{identification4} 
\#\{\omega_2\in \Seq(T_{2,\delta})\,:\,|\omega_2|_{T_{2,\delta}}\leq u_{\delta}B\}\leq\#\{\omega\in \Seq(T)\,:\,|\omega|_T\leq B\}  \vspace{.1cm} 
\end{equation} 
follows from part (1) of Lemma \ref{lem:diophantineapprox}. Therefore, we deduce from \eqref{identification2}, \eqref{identification3} and \eqref{identification4} that \vspace{.15cm} 
\begin{equation}\label{identification5}
\begin{split}
\#\{\omega_2\in \Seq(T_{2,\delta})\,:\,|\omega_2|_{T_{2,\delta}}\leq u_{\delta}B\}&\leq\#\{f\in M_S\,:\,\log\deg_\eta(f)\leq B\}\\[5pt]
&\leq\#\{\omega_1\in \Seq(T_{1,\delta})\,:\,|\omega_1|_{T_{1,\delta}}\leq u_{\delta}B\}. 
\end{split} 
\end{equation}  
\vspace{-.05cm} 

\noindent However, the sets $T_{i,\delta}\subseteq\mathbb{N}$ and so we can count the right and left sides of \eqref{identification5} using generating functions. To make this precise let $G_{i,\delta}(z)=G_{T_{i,\delta}}(z)=\sum_{t\in T_{i,\delta}}z^t$, let $\beta_{i,\delta}$ satisfy $G_{T_{i,\delta}}(\beta_{i,\delta}^{-1})=1$, and let $0<\epsilon'<1$. Then Theorem \ref{thm:generatingfunction} and Remark \ref{rem:bounds}, together with \eqref{identification5} imply \vspace{.15cm} 
\begin{equation}\label{identification6}
\begin{split} 
\Scale[1.1]{\frac{(1-\epsilon')\beta_{2,\delta}}{(\beta_{2,\delta}-1)\,G_{T_{2,\delta}}'\big(\frac{1}{\beta_{2,\delta}}\big)}\,\beta_{2,\delta}^{\;(u_{\delta}B)}-O_{\epsilon'}(1)}&\leq\#\{f\in M_S\,:\,\log\deg_\eta(f)\leq B\}\\[5pt]
&\leq\Scale[1.1]{\frac{(1+\epsilon')\beta_{1,\delta}^3}{(\beta_{1,\delta}-1)\,G_{T_{1,\delta}}'\big(\frac{1}{\beta_{1,\delta}}\big)}\,\beta_{1,\delta}^{\;(u_{\delta}B)}+O_{\epsilon'}(1)} 
\end{split} 
\end{equation} 
\vspace{-.05cm} 

\noindent holds for all $B$ sufficiently large (depending on $\delta$ and $\epsilon'$). On the other hand, a count on the number of functions of bounded log degree yields a count on the number of functions determining a bounded height relation (by Tate's Telescoping Lemma). Specifically, suppose that $P\in V(\overline{K})$ is such that $h_\eta(P)>b_{S,\eta}:=C_{S,\eta}/(d_{S,\eta}-1)$, where $C_{S,\eta}$ and $d_{S,\eta}$ are the constants from Definition \ref{def:htcontrolled,uniformdiscrete} above. Then, Tate's Telescoping Lemma \ref{lem:tate} implies that \vspace{.05cm}
\[\deg_\eta(f)(h_\eta(P)-b_{S,\eta})\leq h_\eta(f(P))\leq\deg_\eta(f)(h_\eta(P)+b_{S,\eta}).\vspace{.05cm}\] 
Therefore, for all $B$ we have the subset relations: \vspace{.15cm} 
\begin{equation}\label{degreestoheights}
\Scale[.87]{\begin{split} 
\bigg\{f\in M_S\,:\,\log\deg_\eta(f)\leq \log\bigg(\frac{B}{h_\eta(P)+b_{S,\eta}}\bigg)\bigg\}&\subseteq\big\{f\in M_S:\, h_\eta(f(P))\leq B\big\} \\[5pt] 
&\subseteq\bigg\{f\in M_S\,:\,\log\deg_\eta(f)\leq \log\bigg(\frac{B}{h_\eta(P)-b_{S,\eta}}\bigg)\bigg\}.\\[5pt]  
\end{split} }
\end{equation} 
In particular, if we replace $B$ with $\log(B/(h_\eta(P)+B_{S,\eta}))$ on the left side of \eqref{identification6}, replace $B$ with $\log(B/(h_\eta(P)-B_{S,\eta}))$ on the right side of \eqref{identification6}, and apply the change of base formulas for logarithms, then we deduce from \eqref{identification6} and \eqref{degreestoheights} that \vspace{.2cm}     
\begin{equation*} 
\begin{split} 
\frac{(1-\epsilon')\beta_{2,\delta}}{(\beta_{2,\delta}-1)\,G_{T_{2,\delta}}'\big(\frac{1}{\beta_{2,\delta}}\big)}\Bigg(\frac{B}{h_\eta(P)+b_{S,\eta}}&\Bigg)^{u_\delta\log(\beta_{2,\delta})}\hspace{-.7cm}-\;\;\;O_{\epsilon',\delta}(1)\\[8pt]
&\leq\#\big\{f\in M_S:\, h_\eta(f(P))\leq B\big\}\\[10pt] 
&\leq\frac{(1+\epsilon')\beta_{1,\delta}^3}{(\beta_{1,\delta}-1)\,G_{T_{1,\delta}}'\big(\frac{1}{\beta_{1,\delta}}\big)}\Bigg(\frac{B}{h_\eta(P)-b_{S,\eta}}\Bigg)^{u_\delta\log(\beta_{1,\delta})}\hspace{-.95cm}+\;\;\,O_{\epsilon',\delta}(1) 
\end{split}  
\end{equation*}
\vspace{-.02cm} 

\noindent Moreover, since point counting on varieties is normally phrased in terms of multiplicative heights, we replace $B$ with $\log B$ in the bounds directly above to deduce that \vspace{.15cm}    
\begin{equation}\label{bd:finalshape}
\Scale[.905]{C_{(2,P,\epsilon',\delta)}\log(B)^{b_{2,\delta}}-O_{\epsilon',\delta}(1)\leq\#\{f\in M_S\,:\, H_\eta(f(P))\leq B\}\leq C_{(1,P,\epsilon',\delta)}\log(B)^{b_{1,\delta}}+O_{\epsilon',\delta}(1)}.
\end{equation} 
\vspace{-.05cm} 

\noindent Here the explicit constants $C_{(i,P,\epsilon',\delta)}$ and the inexplicit $O_{\epsilon',\delta}(1)$ both depend on $S$ and $\eta$ as well, though we leave this dependence off in the notation. Specifically, \vspace{.25cm} 
\begin{equation}\label{bd:explicitconstants}
\Scale[1.125]{
\begin{matrix}
\;\;C_{(2,P,\epsilon',\delta)}=\frac{(1-\epsilon')\beta_{2,\delta}}{(\beta_{2,\delta}-1)\,G_{T_{2,\delta}}'\big(\frac{1}{\beta_{2,\delta}}\big)\big(h_\eta(P)+b_{S,\eta}\big)^{u_\delta\log(\beta_{2,\delta})}},&& b_{2,\delta}=u_\delta\log(\beta_{2,\delta}),\\
&&&\\
&&&\\
\;\;C_{(1,P,\epsilon',\delta)}=\frac{(1+\epsilon')\beta_{1,\delta}^3}{(\beta_{1,\delta}-1)\,G_{T_{1,\delta}}'\big(\frac{1}{\beta_{1,\delta}}\big)\big(h_\eta(P)-b_{S,\eta}\big)^{u_\delta\log(\beta_{1,\delta})}},&&b_{1,\delta}=u_\delta\log(\beta_{1,\delta}).
\\
&&&  
\end{matrix}}
\end{equation}
In particular \textbf{by fixing any $0<\epsilon'<1$}, we obtain the desired shape of the bound on heights in orbits in Theorem \ref{thm:functioncount'} from \eqref{bd:finalshape}.
It therefore remains to show that the positive difference $b_{1,\delta}-b_{2,\delta}$ can be made arbitrarily small (i.e., $<\epsilon$) by letting $\delta$ tend to zero. To see this we need the following convenient lower bound on $\alpha_{1,\delta}$; compare to \cite[Equation (26)]{Me:monoid}.
  
\begin{lemma}\label{rootlbd} Let $T$ and $T_{1,\delta}$ be as above and let $t_1:=\min(T)$. Then there exist positive constants $\delta_T$ and $c_T$ (depending only on $T$) such that ${c_T}^{\frac{1}{n_{t_1,\delta}}}\leq\alpha_{1,\delta}$ for all $0\leq\delta\leq\delta_{T}$.  \vspace{.1cm}     
\end{lemma} 

\begin{proof} We begin with a few observations about $T$. Since $T$ is uniformly discrete, we may choose a constant $\delta_T>0$ such that $|t-t'|>\delta_T$ for all distinct $t,t'\in T$. Hence for all $0<\delta<\delta_T$, it follows from Lemma \ref{lem:diophantineapprox} condition (1) that $n_{t,\delta}\neq n_{t'\delta}$ for all $t\neq t'$. Therefore, the association $t\rightarrow n_{t,\delta}$ is a bijection of sets $T\rightarrow T_{1,\delta}$. In particular, $T$ is countable. Moreover, $T$ must have a unique minimum element. To see this, note that for any fixed $t\in T$ there can be at most finitely many elements of $T$ less than $t$. Explicitly, we have that $\#\{t'\in T: t'< t\}\leq\lceil\frac{t}{\delta_T}\rceil$ by definition of $\delta_T$ and the fact that every element of $T$ is positive. Putting these facts together, we may enumerate $T=\{t_1, t_2, \dots\} $ and assume that $t_1<t_2<\dots$, i.e., the $t$'s are arranged in increasing order. 
Now assume that $\delta<\delta_T$. Then Lemma \ref{lem:diophantineapprox} condition (1) implies that    
\[\frac{t-\delta_T}{t_1}<\frac{t-\delta}{t_1}<\frac{n_{t,\delta}}{n_{t_1,\delta}}\]
for all $t\in T$. Moreover writing $t_m-\delta_T$ as a telescoping sum, we see that \vspace{.1cm}  
\[t_m-\delta_T=(t_m-t_{m-1})+(t_{m-1}-t_{m-2})+\dots+(t_2-t_1)+(t_1-\delta_T)>(m-1)\delta_T+(t_1-\delta_T). \vspace{.1cm}\] 
In particular, if we define the constant $e_T:=\delta_T/t_1$ (independent of $\delta$), then the bounds above together imply that \vspace{.1cm} 
\[(m-2)e_T+1<\frac{t_m-\delta_T}{t_1}<\frac{n_{t_m,\delta}}{n_{t_1,\delta}}\vspace{.1cm}\]
for all indices $m>1$ (this bound remains true for $m=1$, but we do not use it in that case). With this bound in mind, consider the auxiliary function \[\mathfrak{g}_T(z)=z+\sum_{m\geq2}^{\infty}z^{e_T (m-2)+1}=z+z\sum_{i=0}^{\infty}(z^{e_T})^{i}.\] 
In particular, it is clear that $\mathfrak{g}_T$ is convergent on $[0,1)$; in fact, $\mathfrak{g}_T=z+z/(1-z^{e_T})$ on this interval. With these preliminaries in place we reach the key fact:                  
\begin{equation}\label{bdwithG}
\,1=\sum_{n_{t,\delta}\in T_{1,\delta}}\hspace{-.2cm}(\alpha_{1,\delta})^{n_{t,\delta}}=\sum_{t\in T}{(\alpha_{1,\delta})}^{n_{t,\delta}}=(\alpha_{1,\delta})^{n_{t_1,\delta}}+\sum_{\substack{t\in T\\t\neq t_1}}((\alpha_{1,\delta})^{n_{t_1,\delta}})^{\frac{n_{t,\delta}}{n_{t_1,\delta}}}\leq \mathfrak{g}_T({\alpha_{1,\delta}}^{n_{t_1,\delta}}), 
\end{equation}
since ${(\alpha_{1,\delta})}^{n_{t_1,\delta}}\in(0,1]$ and smaller exponents yield larger values on this interval.
 On the other hand, $\mathfrak{g}_T$ is continuous, $\mathfrak{g}_T(0)=0$ and $1\leq \mathfrak{g}_T({\alpha_{1,\delta}}^{n_{t_1,\delta}})$ by \eqref{bdwithG}. Therefore, we may choose a constant $c_T\in(0,1)$ such that $\mathfrak{g}_T(c_T)=1/2$ by the Intermediate Value Theorem. Most importantly since $\mathfrak{g}_T$ is an increasing function we must have that $c_T\leq (\alpha_{1,\delta})^{n_{t_1,\delta}}$, independent of $\delta$ as claimed.      
\end{proof} 
From this point, we can complete the proof of Theorem \ref{thm:functioncount} by following the proof of \cite[Theorem 1.1]{Me:monoid}. To do this, we use the Mean Value Theorem applied to the functions $G_{2,\delta}(z)=\sum_{t\in T}z^{m_{t,\delta}}$ and $\mathfrak{h}_\delta(z)=u_\delta\log(z)$ on the intervals $[\alpha_{1,\delta},\alpha_{2,\delta}]$. Here and for the remainder of the proof, we assume that $0<\delta<\min\{\delta_T,t_1\}$; recall that $t_1:=\min(T)$. In particular, the sets $T$, $T_{1,\delta}$ and $T_{2,\delta}$ are all in bijection, and so we can use any of them for indexing sets for the countable, absolutely convergent sums below. We begin with a few estimates, all of which follow easily from part (1) of Lemma \ref{lem:diophantineapprox}.  
\begin{equation}\label{smallA}
\frac{2\delta}{t_1}<\frac{2\delta u_\delta}{n_{t_1,\delta}}<\frac{2\delta}{t_1-\delta},\qquad 1<\frac{m_{t_1,\delta}}{n_{t_1,\delta}}<\frac{t_1+\delta}{t_1-\delta},\qquad \frac{u_\delta}{m_{t_1,\delta}}<\frac{1}{t_1}.
\end{equation} 
Likewise, we have the lower bound on $\alpha_{1,\delta}$ from Lemma \ref{rootlbd}: 
\begin{equation}\label{smallB}
{c_T}^{\frac{1}{n_{t_1,\delta}}}\leq\alpha_{1,\delta}. 
\end{equation}
In particular, \eqref{smallA} and \eqref{smallB} together imply the following lower bound on the derivative:
\begin{equation}\label{smallC}
\begin{split} 
\Scale[.98]{G_{2,\delta}'(\alpha_{1,\delta})=\sum_{t\in T}m_{t,\delta}(\alpha_{1,\delta})^{m_{t,\delta}-1}}&\geq m_{t_1,\delta}(\alpha_{1,\delta})^{m_{t_1,\delta}-1} \\ 
&\Scale[.98]{\geq m_{t_1,\delta}(\alpha_{1,\delta})^{m_{t_1,\delta}}\geq m_{t_1,\delta}\,{c_T}^{\frac{m_{t_1,\delta}}{n_{t_1,\delta}}}\geq m_{t_1,\delta}\,{c_T}^{\frac{t_1+\delta}{t_1-\delta}}}.
\end{split}  
\end{equation}                      
Similarly, \eqref{smallA} and \eqref{smallB} together imply that:
\begin{equation*}
\begin{split} 
G_{2,\delta}(\alpha_{1,\delta})=\sum_{t\in T}{\alpha_{1,\delta}}^{m_{t,\delta}}&=\sum_{t\in T}{\alpha_{1,\delta}}^{(\frac{m_{t,\delta}}{u_\delta}-\frac{n_{t,\delta}}{u_\delta})u_\delta}\cdot{\alpha_{1,\delta}}^{n_{t,\delta}}\\[3pt]
&\geq \sum_{t\in T}{\alpha_{1,\delta}}^{2\delta u_\delta}\cdot{\alpha_{1,\delta}}^{n_{t,\delta}}\\[3pt]
&={\alpha_{1,\delta}}^{2\delta u_\delta}\cdot\sum_{t\in T}{\alpha_{1,\delta}}^{n_{t,\delta}}\\[3pt]
&={\alpha_{1,\delta}}^{2\delta u_\delta}\geq {c_T}^{\frac{2\delta u_\delta}{n_{t_1,\delta}}}\geq {c_T}^{\frac{2\delta}{t_1-\delta}}.  
\end{split} 
\end{equation*} 
Here, we use also that $0\leq \frac{m_{t,\delta}}{u_\delta}-\frac{n_{t,\delta}}{u_\delta}\leq 2\delta$ by construction; see Lemma \ref{lem:diophantineapprox} part (1). In particular, we deduce the following key upper bound:  
\begin{equation}\label{smallD}
G_{2,\delta}(\alpha_{2,\delta})-G_{2,\delta}(\alpha_{1,\delta})=1-G_{2,\delta}(\alpha_{1,\delta})\leq 1-{c_T}^{\frac{2\delta}{t_1-\delta}}. \vspace{.1cm} 
\end{equation}
We are now ready to apply the Mean Value Theorem to $G_{2,\delta}(x)$ on $[\alpha_{1,\delta},\alpha_{2,\delta}]$. Specifically, \vspace{.1cm}
\begin{equation*}
m_{t_1,\delta}\,{c_T}^{\frac{t_1+\delta}{t_1-\delta}}\leq G_{2,\delta}'(\alpha_{1,\delta})=\min_{\alpha_{1,\delta}\leq x\leq \alpha_{2,\delta}}G_{2,\delta}'(x)\leq \frac{G_{2,\delta}(\alpha_{2,\delta})-G_{2,\delta}(\alpha_{1,\delta})}{\alpha_{2,\delta}-\alpha_{1,\delta}}\leq\frac{1-{c_T}^{\frac{2\delta}{t_1-\delta}}}{\alpha_{2,\delta}-\alpha_{1,\delta}} \vspace{.1cm} 
\end{equation*}
follows from \eqref{smallC}, \eqref{smallD}, and the Mean Value Theorem. Therefore, we have the estimate:  
\begin{equation}\label{smallE}
0\leq \alpha_{2,\delta}-\alpha_{1,\delta}\leq \frac{1-{c_T}^{\frac{2\delta}{t_1-\delta}}}{m_{t_1,\delta}\,{c_T}^{\frac{t_1+\delta}{t_1-\delta}}}\;.
\end{equation} 
Likewise, the Mean Value Theorem for $\mathfrak{h}_\delta(x)=u_\delta\log(x)$ on $[\alpha_{1,\delta},\alpha_{2,\delta}]$, \eqref{smallB}, and the fact that $n_{1,\delta}>0$ together yield \vspace{.1cm}  
\begin{equation}\label{MVTLog}
0\leq\frac{\mathfrak{h}_\delta(\alpha_{2,\delta})-\mathfrak{h}_\delta(\alpha_{1,\delta})}{\alpha_{2,\delta}-\alpha_{1,\delta}}\leq\max_{\alpha_{1,\delta}\leq x\leq\alpha_{2,\delta}}\mathfrak{h}_\delta'(x)=\mathfrak{h}_\delta'(\alpha_{1,\delta})=u_\delta(\alpha_{1,\delta})^{-1}\leq \frac{u_\delta}{c_T}. \vspace{.1cm} 
\end{equation} 
Hence, after combining \eqref{bd:explicitconstants},\eqref{smallA}, \eqref{smallE} and \eqref{MVTLog}, we deduce that \vspace{.15cm} 
\begin{equation}\label{smallF}
\begin{split}
0\leq b_{1,\delta}-b_{2,\delta}=\mathfrak{h}_\delta(\alpha_{2,\delta})-\mathfrak{h}_\delta(\alpha_{1,\delta})&\leq  \frac{u_\delta}{c_T}\cdot \frac{1-(c_T)^{\frac{2\delta}{t_1-\delta}}}{m_{t_1,\delta}(c_T)^{\frac{t_1+\delta}{t_1-\delta}}} \\[4pt]
&= \frac{1}{c_T}\cdot\frac{u_\delta}{m_{t_1,\delta}}\cdot \frac{1-(c_T)^{\frac{2\delta}{t_1-\delta}}}{(c_T)^{\frac{t_1+\delta}{t_1-\delta}}}\leq\frac{1}{c_T\,t_1}\cdot\frac{1-(c_T)^{\frac{2\delta}{t_1-\delta}}}{(c_T)^{\frac{t_1+\delta}{t_1-\delta}}} 
\end{split} 
\end{equation} 

\vspace{.15cm} 
\noindent However, the upper bound in \eqref{smallF} goes to zero as $\delta$ goes to zero (since $c_T>0$). Therefore, the exponents $b_{1,\delta}$ and $b_{2,\delta}$ in \eqref{bd:finalshape} can be made arbitrarily close as claimed.
\end{proof} 
\begin{remark} \label{rem:explicit}
To approximate $b_{1,\delta}$ and $b_{2,\delta}$ in practice, we first approximate the solutions to $G_{i,\delta}(z)=1$ by similar solutions for associated cutoff functions (which are rational). To wit, let $N>0$ be any positive integer and define the rational functions 
\[G_{1,\delta,N}:=\sum_{n\leq N\atop n\in T_{1,\delta}}\mathclap{z^n}\;+\frac{z^N}{1-z}\qquad\text{and}\qquad G_{2,\delta,N}:=\sum_{m\leq N\atop m\in T_{2,\delta}}\mathclap{z^m}\]
associated to $G_{1,\delta}$ and $G_{2,\delta}$ respectively. Then since $n_t<m_t$ for all $t\in T$ by construction, we see that \vspace{.1cm}   
\begin{equation}\label{bd:G's}
G_{2,\delta,N}(z)\leq G_{2,\delta}(z)< G_{1,\delta}(z)\leq G_{1,\delta,N}(z)\qquad \text{for all\, $0<z<1$.} \vspace{.1cm} 
\end{equation}
Now let $\alpha_{i,\delta}\in(0,1)$ satisfy $G_{i,\delta}(\alpha_{i,\delta})=1$ and let $\alpha_{i,\delta,N}\in(0,1)$ satisfy $G_{i,\delta,N}(\alpha_{i,\delta,N})=1$. In particular, $\beta_{i,\delta}={(\alpha_{i,\delta})}^{-1}$ and $b_{i,\delta}=u_\delta\log(\beta_{i,\delta})$ by definition of $\beta_{i,\delta}$ and \eqref{bd:explicitconstants} above. Likewise, we define $\beta_{i,\delta,N}={(\alpha_{i,\delta,N})}^{-1}$ and $b_{i,\delta,N}=u_\delta\log(\beta_{i,\delta,N})$. Then \eqref{bd:G's} implies that  \vspace{.1cm}     
\begin{equation}\label{bd:beta's}
\begin{split} 
\alpha_{1,\delta,N}&\leq\,\alpha_{1,\delta}<\,\alpha_{2,\delta}\leq\,\alpha_{2,\delta,N}\\[6pt] 
\beta_{2,\delta,N}&\leq\, \beta_{2,\delta}<\,\beta_{1,\delta}\leq\,\beta_{1,\delta,N}. \\[6pt] 
b_{2,\delta,N}&\leq\, b_{2,\delta}\,<\,b_{1,\delta}\,\leq\,b_{1,\delta,N}. 
\end{split}
\end{equation}

\vspace{.1cm} 
\noindent Here we use also that both the $G_{i,\delta}$ and the $G_{i,\delta,N}$ are strictly increasing on $(0,1)$. Moreover, $\alpha_{1,\delta,N}$ and $\alpha_{2,\delta,N}$ are algebraic, and we can (in practice) estimate these quantities using known root finding algorithms (and {\tt{Magma}} \cite{Magma}) to any degree of accuracy. In this way, we can give explicit bounds for the growth rate of $\#\{f\in M_S\,:\, H_\eta(f(P))\leq B\}$ in practice. We use this in section \ref{sec:free} to give explicit bounds for the number of points of bounded height in semigroup orbits generated by the polynomials $z^q+1$, where $q$ is in various subsets of the prime numbers (e.g., the Mersenne primes); see, for instance, Corollary \ref{cor:amusing} from the introduction.      
\end{remark}   
Lastly, we can use the bounds in Theorem \ref{thm:functioncount'} on the number of functions in \emph{free} semigroups satisfying a bounded height relation to give an upper bound on the number of points of bounded height in arbitrary semigroup orbits. This is part (1) of Theorem \ref{thm:functioncount} from the introduction. 
\begin{corollary}\label{cor:upper} Let $K$ be a global field, let $V$ be a projective variety over $K$, and let $S$ be a height controlled and uniformly log-discrete set of endomorphisms on $V$ with respect to a divisor class $\eta\in\Pic(V)\otimes\mathbb{R}$. Then there is a positive constant $b=b(S,\eta)$ and a constant $B_{S,\eta}$ such that 
\begin{equation*}
\#\big\{Q\in \Orb_{S}(P)\,:\, H_\eta(Q)\leq B\big\}\ll (\log B)^{b}\vspace{.15cm}
\end{equation*} 
for all $P\in V$ with $H_\eta(P)>B_{S,\eta}$.  
\end{corollary}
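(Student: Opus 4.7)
The plan is to bound the orbit count by the count of words in the free monoid on $S$ evaluated at $P$, and then apply (the argument behind) Theorem \ref{thm:functioncount'} to this free object.

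Write $F_S$ for the free monoid on the alphabet $S$, whose elements are formal words $w=\phi_1\cdots\phi_m$ with $\phi_i\in S$, and let $\pi:F_S\twoheadrightarrow M_S$ be the canonical surjection sending such a word to the composition $\phi_1\circ\cdots\circ\phi_m$. Each $w\in F_S$ then evaluates to the point $w(P):=\pi(w)(P)\in V$, and since $\Orb_S(P)$ is by definition the image of $f\mapsto f(P)$ on $M_S$, every point $Q\in\Orb_S(P)$ is hit by at least one word $w\in F_S$. Consequently
\[
\#\big\{Q\in\Orb_S(P):H_\eta(Q)\leq B\big\}\;\leq\;\#\big\{w\in F_S:H_\eta(w(P))\leq B\big\}.
\]

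Next, I would re-examine the proof of Theorem \ref{thm:functioncount'} and observe that the freeness hypothesis on $M_S$ was invoked only to produce the bijection $\chi:M_S\to\Seq(T)$ in \eqref{identification1}; all other ingredients---Tate's Telescoping Lemma, the multiplicativity of $\deg_\eta$ under composition, the Diophantine approximation step, and the generating function analysis---depend only on the set $S$ and the polarization data, and are insensitive to whether distinct words in $F_S$ collapse to the same element of $M_S$. Replacing $M_S$ by $F_S$ throughout, the corresponding bijection $F_S\to\Seq(T)$ is automatic (using log-discreteness of $T$ to ensure that $\phi\mapsto\log\deg_\eta(\phi)$ is injective on $S$, and hence that the $n_{t,\delta},m_{t,\delta}$ distinguish elements of $S$ for all sufficiently small $\delta$). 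The same argument therefore produces, for any fixed $\epsilon>0$ and some constant $b=b(S,\eta,\epsilon)>0$, the bound
\[
\#\big\{w\in F_S:H_\eta(w(P))\leq B\big\}\;\ll\;(\log B)^{b+\epsilon}
\]
whenever $H_\eta(P)>B_{S,\eta}$.

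Combining the two displayed estimates, fixing any single $\epsilon>0$ (say $\epsilon=1$), and absorbing it into the exponent yields the claimed upper bound with $b$ depending only on $S$ and $\eta$. The degenerate case $|S|=1$ is even easier, since then the orbit grows at most logarithmically in $B$ by Tate's Telescoping Lemma alone. Because the hard combinatorial and analytic work has already been completed in Theorem \ref{thm:functioncount'}, the main obstacle here is really just the bookkeeping observation that its proof carries over intact to the free monoid $F_S$ regardless of whether the natural map $F_S\to M_S$ is injective.
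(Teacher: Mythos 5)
Your proposal is correct and follows essentially the same route as the paper's own proof: both pass to the free monoid $F_S$ on the alphabet $S$, observe that the combinatorial/analytic machinery of Theorem \ref{thm:functioncount'} only needs the bijection between words and sequences in $\Seq(T)$ (which holds automatically for $F_S$), and then bound the orbit count by the word count via the surjection $w\mapsto w(P)$. The only cosmetic difference is that the paper dispatches the degenerate case $|S|<2$ by citing Call--Silverman (obtaining the sharper $\log\log B$ growth), whereas you note directly that Tate's Telescoping Lemma suffices there; either is fine for the stated upper bound.
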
 
\begin{proof} Suppose $S$ is a height controlled and uniformly log-discrete set of endomorphisms on $V$. We may assume that $S$ has at least two elements:  otherwise the desired bound, in fact with a power of $\log(B)$ replaced with $\log(\log(B))$, follows easily from the celebrated results of Call and Silverman \cite{Call-Silverman}. Now let $F_S$ be the free semigroup generated by $S$ under concatenation. Then, given a word $w=\theta_1\dots\theta_n\in F_S$, we can define an action of $w$ on $V$ via $w\cdot P= \theta_1\circ\dots\circ\theta_n(P)$. Likewise, we define the $\eta$-degree of $w$ to be $\deg_\eta(\theta_1\circ\dots\circ\theta_n)$. In particular, (by counting words of bounded degree) it is straightforward to see that we can replace $M_S$ with $F_S$ in the proof of Theorem \ref{thm:functioncount'}, choose any $0<\delta<\delta_T$, and deduce that  \vspace{.1cm}
\[\log(B)^{b_{2,\delta}}\ll\#\{w\in F_S\,:\, H_\eta(w\cdot P)\leq B\}\ll\log(B)^{b_{1,\delta}} \vspace{.1cm}\]  
for all $P$ with $H_\eta(P)> B_S$ as before; see \eqref{bd:finalshape}. In particular, since every point $Q\in\Orb_S(P)$ is of the form $Q=w\cdot P$ for some $w\in F_S$, we have that \vspace{.1cm}
\[\#\{Q\in\Orb_S(P)\,:\, H_\eta(Q)\leq B\}\leq\#\{w\in F_S\,:\, H_\eta(w\cdot P)\leq B\}\ll\log(B)^{b_{1,\delta}}. \vspace{.15cm}\]   
Therefore, the number of points in $\Orb_S(P)$ with height at most $B$ is $\ll\log(B)^{b_{1,\delta}}$.   
\end{proof}                 
 
\section{Free semigroups generated by unicritical polynomials}\label{sec:free}
In order to apply both the upper and lower bounds from Theorem \ref{thm:functioncount} to some explicit, infinitely generated examples, we need three properties to hold: the full semigroup is free, the underlying generating set is height controlled, and the set of degrees of the generating set is uniformly log-discrete. This last property is the easiest to ensure; if we can find an infinitely generated example that is free and height controlled, then we can restrict to a subset of the generating set on which all three properties hold. Likewise freeness, though it can be nontrivial to prove, should hold generically; see, for instance,  \cite[Proposition 4.1]{Me:monoid}. Therefore, the property of being height controlled is the most restrictive of the three (for infinite generating sets). Nevertheless, we can produce examples of infinite height controlled sets using unicritical polynomials as in \cite[Example 2.7]{Me:stochastic}. With this in mind, we first show that these polynomials generate free semigroups under composition. This fact is perhaps well known to the experts (at least in the finitely generated case), but without a suitable reference we include a proof here:       
      
\begin{theorem}\label{thm:free} Let $k$ be a field of characteristic zero and let $S=\big\{z^d+c:\,d\geq2,\, c\in k^*\big\}$. Then the semigroup generated by $S$ under composition is free.     
\end{theorem}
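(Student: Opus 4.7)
The plan is to show that any composition $F = f_1 \circ \cdots \circ f_n$ with $f_i(z) = z^{d_i} + c_i \in S$ uniquely determines its innermost factor $f_n$ from the top two nonzero coefficients of $F(z)$; freeness then follows by induction on composition length. This approach sidesteps the general machinery of Ritt's theorems by exploiting the very restricted shape of unicritical polynomials with nonzero constant term.

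The main technical input is the following claim, which I would prove by induction on $n$: setting $D = \deg F = \prod_i d_i$, every nonzero monomial of $F(z)$ has degree $0$ or a positive multiple of $d_n$, and when $n \geq 2$ the second-highest nonzero term is $(D/d_n)\, c_n\, z^{D-d_n}$, whose coefficient is nonzero thanks to the characteristic-zero hypothesis. For the inductive step, write $F = G(z)^{d_1} + c_1$ with $G = f_2 \circ \cdots \circ f_n$, apply the inductive hypothesis to $G$, and read off the coefficient of $z^{D-d_n}$ from the linear cross-term in the binomial expansion, noting that higher-order cross-terms contribute only at degrees $\leq D - 2 d_n$. Given this structural fact, one recovers $d_n$ from the exponent gap between the top two nonzero monomials of $F$ and then $c_n$ from the coefficient of $z^{D-d_n}$. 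The degenerate case in which $F$ has only the two monomials $z^D$ and a constant $c \in k^*$ corresponds exactly to $n = 1$, since for $n \geq 2$ the monomial $(D/d_n)\, c_n\, z^{D-d_n}$ is nonzero and lies strictly between $z^D$ and the constant term.

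With this recovery step in hand, suppose $F = f_1 \circ \cdots \circ f_n = g_1 \circ \cdots \circ g_m$ with all factors in $S$. The recovery step yields $f_n = g_m =: h$, and then injectivity of post-composition by the non-constant polynomial $h$ in $k[z]$ cancels $h$ on the right, giving $f_1 \circ \cdots \circ f_{n-1} = g_1 \circ \cdots \circ g_{m-1}$ (with an empty composition interpreted as the identity). Strong induction on $\deg F$ then forces $n = m$ and $f_i = g_i$ for all $i$, which is exactly the desired freeness. The main obstacle is the coefficient computation in the key structural claim: one must certify that no hidden cancellation occurs at the critical exponent $D - d_n$, and this is precisely where the characteristic-zero hypothesis enters, via the nonvanishing of the integer $D/d_n = d_1 \cdots d_{n-1}$ in $k$. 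Everything else in the argument is bookkeeping.
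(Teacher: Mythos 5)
Your proof is correct, and it takes a genuinely different route from the paper's. The paper proves freeness by cancelling the \emph{outermost} generator: starting from a relation $\theta_1\circ\cdots\circ\theta_r=\tau_1\circ\cdots\circ\tau_k$, it rewrites this as the Cassels--Catalan equation $f_1^{d_1}-g_1^{d_2}=c_2-c_1$ over $k[z]$, applies Mason's ABC theorem to show there are no non-constant polynomial solutions, and then runs a gcd argument on the exponents (together with a second appeal to the Catalan lemma) to force $\theta_1=\tau_1$ and $f_1=g_1$. You instead cancel the \emph{innermost} generator, reading $d_n$ off the gap $D-\deg(F-z^D)$ and $c_n$ off the coefficient of $z^{D-d_n}$, which you compute to be $(d_1\cdots d_{n-1})\,c_n$; nonvanishing of that integer is exactly where characteristic zero enters for you, while the paper uses characteristic zero through Mason's theorem. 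After the innermost factor is pinned down, right-cancellation by a non-constant polynomial over an infinite field finishes the induction. Your argument is more elementary and self-contained (no ABC, no gcd manipulation of exponents, no case analysis on roots of unity); the paper's route via the Catalan equation is thematically aligned with the rest of its Section 4, where the $S$-integral Catalan theorem of Brindza is used again for the point-counting. One small presentational note: the structural claim is cleanest if stated for all $n\geq 1$, with the $n=1$ case reading the constant $c_1 = (D/d_1)c_1\,z^{0}$ as the ``second-highest term''; this makes the induction base uniform and avoids treating $n=1$ versus $n\geq 2$ as a separate degenerate case in the coefficient computation.
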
 
As a first step, we note that there are no non-constant solutions to a Cassels-Catalan equation over the polynomial ring $k[z]$, a simple consequence of Mason's ABC theorem \cite{Mason}.    
\begin{lemma}\label{lem:catalan} Let $k$ be a field of characteristic zero, let $a,b,c\in k^*$, and let $m,n\geq2$. Then there are no non-constant solutions $x,y\in k[z]$ to the equation $ax^n+by^m=c$
\end{lemma}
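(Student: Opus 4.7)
The plan is to apply the Mason--Stothers ABC theorem to the equation $ax^n + by^m = c$ after verifying its hypotheses. Suppose for contradiction that $(x,y)$ is a solution with at least one of $x, y$ non-constant. A quick degree argument shows that in fact both must be non-constant: if $x \in k^*$, then $by^m = c - ax^n$ is a non-zero constant, so $m\deg(y) = 0$, and since $\operatorname{char}(k)=0$ this forces $y$ to be constant; the symmetric case is identical. So we may assume $\deg(x), \deg(y) \geq 1$.

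Next I would verify the coprimality needed for Mason--Stothers. Any irreducible $p \in k[z]$ dividing both $x$ and $y$ would divide $ax^n + by^m = c$, contradicting $c \in k^*$; hence $\gcd(x,y) = 1$. Consequently the triple $f = ax^n$, $g = by^m$, $h = c$ is pairwise coprime in $k[z]$ (the third entry being a unit), satisfies $f + g = h$, and is not all constant since $x, y$ are non-constant.

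Mason--Stothers then yields
\[
\max\bigl(\deg f, \deg g, \deg h\bigr) \;\leq\; N_0(fgh) - 1,
\]
where $N_0$ denotes the number of distinct roots in $\overline{k}$. Because $c \in k^*$ contributes no roots and the distinct roots of $x^n y^m$ are exactly the distinct roots of $xy$, we get $N_0(fgh) \leq \deg(x) + \deg(y)$. Substituting $\deg f = n\deg(x)$ and $\deg g = m\deg(y)$ gives the two inequalities
\[
n\deg(x) \;\leq\; \deg(x) + \deg(y) - 1, \qquad m\deg(y) \;\leq\; \deg(x) + \deg(y) - 1.
\]
Adding them and using $n, m \geq 2$ produces
\[
2\bigl(\deg(x) + \deg(y)\bigr) \;\leq\; n\deg(x) + m\deg(y) \;\leq\; 2\bigl(\deg(x) + \deg(y)\bigr) - 2,
\]
a contradiction.

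There is no real obstacle here; this is a textbook application of Mason--Stothers. The only points needing a moment of care are checking that $c$ being a non-zero constant makes it a unit in $k[z]$ (ensuring the pairwise-coprime hypothesis is trivially satisfied on the right-hand side), and ruling out the mixed case where exactly one of $x$, $y$ is constant, which is where characteristic zero is used.
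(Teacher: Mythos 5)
Your proof is correct and takes essentially the same route as the paper: both apply Mason--Stothers to the pairwise-coprime triple $(ax^n, by^m, c)$ and extract the same two inequalities $n\deg(x), m\deg(y) \le \deg(x)+\deg(y)-1$. The only cosmetic difference is that you add the inequalities to reach a contradiction while the paper chains them, and you also dispose of the mixed case (one of $x,y$ constant) explicitly -- a small point the paper implicitly sidesteps by assuming both non-constant. One minor remark: characteristic zero is not really what rules out the mixed case (that follows from degree considerations alone); it is what guarantees Mason--Stothers applies, since it prevents the triple from having identically vanishing derivatives.
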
 
\begin{proof} Suppose that $x,y\in k[z]$ are non-constant solutions to the equation  $ax^n+by^m=c$. Note that since $a,b,c\in k^*$, the triple $(A,B,C)=(ax^n,by^m,c)$ is pairwise coprime in $k[z]$. In particular, Mason's ABC Theorem over $k[z]$ implies that  \vspace{.1cm}    
\begin{equation}
\begin{split}  
\max\big\{n\deg(x), m\deg(y)\big\}&=\max\{\deg(A),\deg(B),\deg(C)\}\\[5pt] 
&\leq\deg(\text{rad}(ABC))-1\leq\deg(x)+\deg(y)-1;
\end{split} 
\end{equation} 
here $\text{rad}(f)$ denotes the number of distinct roots of $f\in k[z]$. On the other hand $n,m\geq 2$, hence 
\[2\deg(x)\leq \deg(x)+\deg(y)-1\;\;\;\text{and}\;\;\;2\deg(y)\leq \deg(x)+\deg(y)-1. \vspace{.1cm}\] 
In particular, these inequalities together imply that 
\[\deg(x)\leq\deg(y)-1\leq\deg(x)-2,\]
a contradiction. Therefore, there are no non-constant solutions $x,y\in k[z]$ to the equation $ax^n-by^m=c$ as claimed.     
\end{proof} 
\begin{proof}[(Proof of Theorem \ref{thm:free})] Now suppose that we have a relation
\begin{equation}\label{relation}   
\theta_1\circ\dots\circ\theta_r=\tau_1\circ\dots\circ\tau_k
\end{equation} 
for some $\theta_i,\tau_j\in S$. We will show that $r=k$ and $\theta_i=\tau_i$ for all $1\leq i\leq r$. To do this, write $\theta_1=z^{d_1}+c_1$ and $\tau_1=z^{d_2}+c_2$. Clearly if $n=m=1$, then $\theta_1=\tau_1$  and there is nothing to prove. Therefore, we may assume without loss of generality that $r\geq k$ and $r\geq2$. Let $f_1=\theta_2\circ\dots\circ\theta_r$, let $g_1=\tau_2\circ\dots\circ\tau_k$ if $k\geq2$, and let $g_1$ be the identity if $k=1$ (all non-constant polynomials). Then \eqref{relation} implies that
\begin{equation}\label{relation2}
f_1^{d_1}-g_1^{d_2}=c_2-c_1.
\end{equation}  
In particular, $c_1=c_2$, since otherwise $(x,y)=(f_1(z),g_1(z))$ is a non-constant solution to the Cassels-Catalan equation given by $(a,b,c,n,m)=(1,-1,c_1-c_2,d_1,d_2)$, which contradicts Lemma \ref{lem:catalan}. Therefore, \eqref{relation} becomes $f_1^{d_1}=g_1^{d_2}$. Now suppose $d=\gcd(d_1,d_2)$ so that $d_1=e_1d$ and $d_2=e_2d$ for some coprime positive integers $e_1$ and $e_2$. Then 
\[(f_1^{e_1})^d=f_1^{d_1}=g_1^{d_2}=(g_1^{e_2})^d\] 
and thus  $(f_1^{e_1})/(g_1^{e_2})$ is a $d$-th root of unity (in particular, a constant function). Hence, we may write $f_1^{e_1}=\zeta_d\, g_1^{e_2}$ for some $d$-th root of unity $\zeta_d$. On the other hand $f_1,g_1\in M_S$, and so $f_1$ and $g_1$ are both monic polynomials. Likewise, $f_1^{e_1}$ and $g_1^{e_2}$ are monic. In particular, the relation  $f_1^{e_1}=\zeta_d\, g_1^{e_2}$ forces $\zeta_d=1$, and we deduce that $f_1^{e_1}=g_1^{e_2}$ for some coprime exponents $e_1$ and $e_2$. Therefore after writing $1=we_1+ve_2$ for some $u,v\in\mathbb{Z}$, we see that 
\[f_1=f_1^{we_1+ve_2}=(f_1^{e_1})^w\cdot (f_1^{v})^{e_2}=(g_1^{e_2})^w\cdot (f_1^{v})^{e_2}=(f_1^{v}\cdot g_1^{w})^{e_2}.\]
Likewise, it is easy to check that $g_1=(f_1^{v}\cdot g_1^{w})^{e_1}$. In particular, if we let $u=f_1^{v}\cdot g_1^{w}\in k(z)$, then we have shown that 
\begin{equation}\label{powers}
f_1=u^{e_2}\;\;\;\text{and}\;\;\; g_1=u^{e_1}.
\end{equation}  
Note also that $u$ must in fact lie in the polynomial ring, since it is integral over $k[z]$ and $k[z]$ is integrally closed. Now then suppose that either $e_1$ or $e_2$ is strictly bigger than one (we'll show that this is impossible). Suppose first that $e_2>1$. Then writing $f_1=f_2^{d_3}+c_3$ for some $f_2\in M_S$ (possibly the identity function), we see that \eqref{powers} implies that 
\[f_2^{d_3}+c_3=u^{e_2}.\]
However, this contradicts Lemma \ref{lem:catalan} since in that case $(x,y)=(f_2(z),u(z))$ is a non-constant solution to the Cassels-Catalan equation given by $(a,b,c,n,m)=(1,-1,-c_3,d_3,e_2)$; here we use crucially that $c_3\neq0$ by definition of $S$. Hence, $e_2$ must be equal to one. Repeating the same argument when $k\geq2$ with the relation $g_1=u^{e_1}$ from \eqref{powers}, we see that $e_1$ must equal one also. On the other hand if $k=1$, then $g_1=z$ and \eqref{powers} immediately implies that $e_1=1$ (since $z$ is irreducible). In particular $e_1=1=e_2$ in either case, and it follows by definition of $e_1$ and $e_2$ that $d_1=d=d_2$. Hence, coupled with the fact that we have already shown that $c_1=c_2$, we deduce that $\theta_1=\tau_1$. Therefore, \eqref{relation2} implies that $f_1^d=g_1^d$ so that $f_1=\zeta_d g_1$ for some root of unity $\zeta_d$. However, again $f_1$ and $g_1$ are both in $M_S$ and therefore monic. In particular, $\zeta_d=1$ and $f_1=g_1$. Note that this precludes the possibility that $k=1$, since otherwise $d_3=\deg(f_1)=\deg(g_1)=1$ by simply equating degrees.  

To recap, we have shown that if $\theta_1\circ\dots\circ\theta_r=\tau_1\circ\dots\circ\tau_k$ for some $\theta_i,\tau_j\in S$ and some $r\geq2$, then $k\geq2$, $\theta_1=\tau_1$, and $\theta_2\circ\dots\circ\theta_r=f_1=g_1=\tau_2\circ\dots\circ\tau_k$. Repeating this argument until the $\tau's$ are eliminated (possible since $r\geq k$), we see that $\theta_i=\tau_i$ for all $1\leq i\leq k$. On the other hand if $r>k$, then after equating the degrees in the original expression $\theta_1\circ\dots\circ\theta_r=\tau_1\circ\dots\circ\tau_k$, we see that 
$\deg(\theta_{k+1})\cdots\cdot\deg(\theta_r)=1$. But, this is impossible since each $\theta\in S$ has degree at least $2$. 	Hence, $r=k$ and the expression in \eqref{relation} is the trivial one. Therefore, $M_S$ is free as claimed.                        
\end{proof} 
\begin{remark} Note that if we include the constant terms $c=0$ in $S$, then $M_S$ will not be free (monic power maps commute). Likewise, if we allow non-monic unitcritical polynomials, then $M_S$ may fail to be free. For instance, if $\phi_1=z^2+1$ and $\phi_2=-\phi_1=-z^2-1$, then $\phi_1\circ\phi_1=\phi_1\circ\phi_2$ is a non-trivial compositional relation. In this way, Theorem \ref{thm:free} is in some sense the strongest possible statement.   
\end{remark}
In particular, we can put Theorem \ref{thm:free} together with \cite[Lemma 12]{Ingram} to produce infinitely generated semigroups for which the upper and lower bounds in Theorem \ref{thm:functioncount} hold. Recall that $\mathfrak{d}\subset\mathbb{N}$ is called \emph{uniformly log-discrete} if there is a constant $\delta$ such that $|\log d-\log d'|>\delta$ for all distinct $d,d'\in\mathfrak{d}$.    
\begin{corollary}\label{cor:examples} Let $\mathfrak{d}\subset\mathbb{N}$ be \emph{uniformly log-discrete}, and let $c\in\overline{\mathbb{Q}}^{*}$. Then 
\[S_{\mathfrak{d},c}:=\{z^d+c\,:\,d\in\mathfrak{d}, d\geq2\}\]
is height controlled and uniformly log-discrete. Moreover, the semigroup generated by $S_{\mathfrak{d},c}$ under composition is free. In particular, statement (2) of Theorem \ref{thm:functioncount} holds for $S_{\mathfrak{d},c}$.      
\end{corollary}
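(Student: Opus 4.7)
The plan is to verify the three hypotheses of Theorem \ref{thm:functioncount}(2) --- height controlled, uniformly log-discrete, and free --- in turn, and then to invoke the theorem directly. Throughout I take $V=\mathbb{P}^1$ and $\eta$ equal to the class of a single point, so that $h_\eta$ is the standard logarithmic Weil height on $\mathbb{P}^1$ and $\phi^*\eta=\deg(\phi)\cdot\eta$ for every endomorphism $\phi$.

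First I would handle the uniformly log-discrete condition, which is immediate: since $\deg_\eta(z^d+c)=d$, the associated set $T=\{\log d_\phi:\phi\in S_{\mathfrak{d},c}\}$ equals $\{\log d:d\in\mathfrak{d},\,d\geq 2\}$, and uniform discreteness of $T$ is the defining hypothesis on $\mathfrak{d}$. Next, for the height controlled condition, property (1) of Definition \ref{def:htcontrolled,uniformdiscrete} follows from $\phi^*\eta=d\cdot\eta$ with $d_\phi=d>1$, while property (2) asks for $\sup_\phi C(\mathbb{P}^1,\eta,\phi)<\infty$. For this I would invoke \cite[Lemma 12]{Ingram}, which establishes exactly this bound for unicritical polynomials $z^d+c$ with fixed constant term: the telescoping constant is bounded by some $\kappa(c)$ depending only on $c$ and not on the degree $d$. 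Finally, for freeness I would apply Theorem \ref{thm:free} with $k=\overline{\mathbb{Q}}$: the ambient semigroup on the full generating set $\{z^d+c':d\geq 2,\,c'\in\overline{\mathbb{Q}}^*\}$ is free, and since $S_{\mathfrak{d},c}$ sits inside this larger set, any nontrivial compositional relation within $M_{S_{\mathfrak{d},c}}$ would lift to a nontrivial relation in the ambient free semigroup, a contradiction. Hence $M_{S_{\mathfrak{d},c}}$ is itself free on $S_{\mathfrak{d},c}$.

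With the three hypotheses in place, statement (2) of Theorem \ref{thm:functioncount} applies word for word to $S_{\mathfrak{d},c}$, yielding the conclusion. I expect the only genuine obstacle to be the uniform boundedness of the height constants: this is the one input that does not follow mechanically from the definitions, and it genuinely uses the rigid structure of unicritical polynomials with a fixed constant term. Without such a uniformity the dominant-pole analysis in Section \ref{sec:countingfunctions} would collapse, so isolating Ingram's lemma as the key arithmetic ingredient is really the heart of the matter; the remaining two conditions are essentially bookkeeping.
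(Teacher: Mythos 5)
Your proposal is correct and takes essentially the same route as the paper: Ingram's Lemma 12 gives the uniform bound $|h(\phi(P))-d\,h(P)|\leq h(c)+\log 2$ for the height-controlled condition, uniform log-discreteness is immediate from the hypothesis on $\mathfrak{d}$, and freeness follows from Theorem \ref{thm:free} by the inclusion $S_{\mathfrak{d},c}\subset S$.
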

\begin{proof} It follows from \cite[Lemma 12]{Ingram} that 
\begin{equation}\label{eq:unicrit+}
|h(\phi(P))-\deg(\phi)h(P)|\leq h(c)+\log2
\end{equation} 
for all $P\in\mathbb{P}^1(\overline{\mathbb{Q}})$ and all $\phi\in S_{\mathfrak{d},c}$. Hence, $S_{\mathfrak{d},c}$ is height controlled. Likewise, $S_{\mathfrak{d},c}$ is uniformly log-discrete by our assumption on $\mathfrak{d}$. Finally, the semigroup generated by $S_{\mathfrak{d},c}$ under composition is free since $S_{\mathfrak{d},c}\subset S$ and $M_S$ is free by Theorem \ref{thm:free}.         
\end{proof} 
\begin{remark} In particular, $S_{a,b,c}:=\{x^d+c\,:\,\text{$d=a^n+b$ for some $n\geq1$}\}$ from Example \ref{eg:htcontrolledandlogdiscrete} above satisfy Corollary \ref{cor:examples} and Theorem \ref{thm:functioncount} part (2).
\end{remark}  
\begin{remark}Technically, if we include finitely many $c$'s (i.e., constant terms), then $S_{\mathfrak{d},c}$ is still height controlled. However, allowing more than one $c$ precludes $S_{\mathfrak{d},c}$ from being uniformly log-discrete.     
\end{remark} 

\section{Orbits generated by unicritical polynomials: the Catalan case}
Now that we know that polynomials of the form $z^d+c$ generate free semigroups, we would like to use Theorem \ref{thm:functioncount} to count points (not just functions) of bounded height in orbits. To do this for any given basepoint $P$, we need to control the number of distinct functions $f$ and $g$ in the relevant semigroup that can agree at $P$, i.e., $f(P)=g(P)$. Perhaps unsurprisingly, this is possible for finitely generated semigroups over $\mathbb{P}^1$ by invoking Siegel's integral point theorem and the polynomial classification theorem of Bilu-Tichy \cite{Bilu}; essentially, you need only control the integral points on the finitely many curves $f(x)=g(y)$ where $f$ and $g$ are in the generating set; see \cite{Me:monoid} for details. However, the problem is harder when one allows infinitely generated semigroups (which is in some sense the point of this paper), since there are infinitely many curves to control. On the other hand, there is a case where this is possible, namely when we restrict ourselves to polynomials of the form $z^q+1$ (with prime exponent and constant term one); to do this, we use known results for integral solutions to the Catalan equations.   

With this in mind, we set some notation used throughout this section. Fix a number field $K$ and let $w_K$ be the number of roots of unity in $K$. Then we let $S_K$ be the set of polynomials 
\begin{equation}\label{SK}
S_K:=\{z^q+1\,:\,\text{$q$ is prime and $\gcd(q,w_K)=1$} \}
\end{equation} 
with prime degree (coprime to $w_K$) and constant term one. Likewise, since we need our generating sets to be uniformly log-discrete to apply Theorem \ref{thm:functioncount}, we consider subsets of $S_K$ also. Namely, given a uniformly log-discrete set of primes $\mathfrak{q}$ (meaning there is a constant $\delta>0$ such that $|\log(q)-\log(q')|>\delta$ for all distinct $q,q'\in\mathfrak{q}$), we define 
\begin{equation}\label{SKq}
S_{K,\mathfrak{q}}:=\{z^q+1\,:\,\text{$q\in\mathfrak{q}$ and $\gcd(q,w_K)=1$}\}.
\end{equation} 
Then to pass from counting functions in $M_{S_K}$ and $M_{S_{K,\mathfrak{q}}}$ to counting points in orbits, we need the following result, which bounds the  solutions to the Catalan equation over suitable rings of integers. 
\begin{theorem}[Brindza \cite{Brindza}]\label{thm:catalanSintegers} Let $K$ be a number field, let $\mathfrak{s}$ be a finite set of places of $K$ containing the archimedean ones, and let $\mathfrak{o}_{K,\mathfrak{s}}$ be the ring of $\mathfrak{s}$-integers of $K$. Then there exists an effectively computable constant $\kappa_{\mathfrak{s}}>2$ such that if 
\begin{equation}\label{CatalanEquation}
x^n-y^m=1
\end{equation} 
for some $n,m\in\mathbb{N}$ with $nm>4$ and some $x,y\in\mathfrak{o}_{K,\mathfrak{s}}$ which are not roots of unity, then 
\[\max\{h(x),h(y),n,m\}\leq \kappa_\mathfrak{s}.\]    
\end{theorem}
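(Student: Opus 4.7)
The plan is to decouple the theorem into an effective bound on the exponents and an effective bound on the heights. First I would bound $\max\{n,m\}$ by a constant $C_1(K,\mathfrak{s})$ depending only on $K$ and $\mathfrak{s}$; then, for each of the finitely many admissible pairs $(n,m)$, bound $h(x)$ and $h(y)$ by a constant $C_2(K,\mathfrak{s},n,m)$ using effective results for $\mathfrak{s}$-integral points on superelliptic curves; finally take $\kappa_\mathfrak{s}$ to be the maximum of these constants together with $2$.

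For the height step, fix a pair $(n,m)$ with $nm>4$ and view the equation as $y^m = x^n - 1 = \prod_{d \mid n}\Phi_d(x)$. The affine curve $C_{n,m}: Y^m = X^n-1$ has geometric genus at least one once $nm>4$ (the only genus-zero degenerations $(n,m) \in \{(1,\ast),(\ast,1),(2,2)\}$ are excluded by the hypothesis). Effective Siegel-type bounds for $\mathfrak{s}$-integral points on curves of positive genus, in the form worked out by Baker, Coates, Sprind\v zuk and refined by Brindza--Schinzel for superelliptic equations $y^m = f(x)$ over $\mathfrak{o}_{K,\mathfrak{s}}$, then yield $\max\{h(x),h(y)\} \leq C_2(K,\mathfrak{s},n,m)$ explicitly. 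The hypothesis that neither $x$ nor $y$ is a root of unity enters here to rule out the trivial degenerations ($x$ a root of unity forcing $y^m = 0$, and the symmetric case) for which the geometric bound would be vacuous.

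For the exponent step, I would imitate Tijdeman's classical Catalan argument in the $\mathfrak{s}$-integer setting. Using the cyclotomic factorization $\prod_{d \mid n}\Phi_d(x) = y^m$, together with bounds on $\gcd\bigl(\Phi_d(x),\Phi_e(x)\bigr)$ coming from resultants of cyclotomic polynomials, one reduces to a finite collection of auxiliary relations of the form $u\alpha^n - v\beta^m = \text{(bounded)}$ with $u,v$ controlled $\mathfrak{s}$-units and $\alpha,\beta$ algebraic of bounded height. Applying Baker's theorem on linear forms in logarithms in both its archimedean and $\mathfrak{p}$-adic forms (so every place of $\mathfrak{s}$ is accounted for) then produces an effective upper bound $\max\{n,m\} \leq C_1(K,\mathfrak{s})$.

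The principal obstacle is this last exponent bound. The $\mathfrak{s}$-integer version of Baker's method requires simultaneous control of archimedean and non-archimedean linear forms in logarithms of algebraic numbers, with fully explicit dependence on the number of places of $\mathfrak{s}$ and on the heights of the cyclotomic units that appear. The algebraic bookkeeping needed to convert $x^n - y^m = 1$ into a clean $\mathfrak{s}$-unit equation is delicate, and it is here that the non-torsion hypothesis on $x$ and $y$ becomes indispensable: the Baker--W\"ustholz lower bounds for $|\alpha^n - 1|$ are useful only for $\alpha$ not a root of unity, and without this assumption trivial solutions would obstruct any effective estimate.
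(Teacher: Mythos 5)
The paper itself offers no proof of this statement: it is stated with the attribution ``[Brindza]'' and cited directly from Brindza's 1987 \emph{Acta Arithmetica} paper, so there is no in-paper argument to compare your sketch against. Your two-stage decomposition (Baker-type linear forms in logarithms to bound the exponents, then effective Siegel/superelliptic bounds for each of the finitely many residual pairs $(n,m)$) is indeed the standard Tijdeman--Brindza strategy, so the outline is pointed in the right direction.

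There is, however, one concrete error. You assert that $nm>4$ forces the curve $Y^m = X^n-1$ to have positive genus because the only genus-zero cases are $(1,\ast)$, $(\ast,1)$, $(2,2)$; but $(n,m)=(1,5)$ satisfies $nm=5>4$ and gives the rational curve $Y^5 = X-1$. In fact, with $n=1$ the equation $x - y^m = 1$ has infinitely many $\mathfrak{s}$-integral solutions of unbounded height with $y$ not a root of unity, so the statement as written in the paper also tacitly needs $n,m\ge 2$ (a hypothesis Brindza's original theorem carries, and one that holds in every application inside the paper, where all exponents are primes or degrees of compositions and hence $\ge 2$). Your genus argument is only valid once $n,m\ge 2$ is added, and your sketch should state this explicitly rather than derive it from $nm>4$.

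A smaller imprecision: the non-torsion hypothesis on $x$ and $y$ is not really about a ``degeneration to $y^m=0$.'' Its role is to kill families in which one coordinate is a root of unity and the exponents run free. For example, with $x=-1$ over $\mathbb{Q}$ the relation $x^n - y^m = 1$ becomes $y^m = -2$ for every odd $n$, so $n$ is unbounded; with $y$ a root of unity of order dividing $m$ one gets $x^n = 2$ with $m$ unbounded. In the Baker step this manifests exactly as you say --- the lower bound for $|\alpha^n - 1|$ vanishes when $\alpha$ is a root of unity --- but it is the unbounded-exponent families, not the height conclusion, that the hypothesis is protecting.

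Beyond these points, your sketch remains at the level of a roadmap: the genuine labor in Brindza's argument is the explicit $\mathfrak{p}$-adic Baker estimates with uniform dependence on $\#\mathfrak{s}$ and the bookkeeping needed to reduce $x^n-y^m=1$ to an $\mathfrak{s}$-unit/linear-form situation. You flag this yourself as the principal obstacle, which is fair, but a referee would not accept the present text as a proof of the theorem --- it is a correct high-level description of the strategy, modulo the genus slip above.
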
 
Now, we may use Brindza's bound on $\mathfrak{s}$-integral solutions to the Catalan equation to control the set of functions with common value at $P$ in the semigroups generated by \eqref{SK} and \eqref{SKq}. In particular, in this step we do not need to assume that the set of degrees form a uniformly log-discrete set; our argument works for the full semigroup generated by $z^q+1$ and $q$ prime, as long as $\gcd(q,w_K)=1$. In what follows, given $f=\theta_1\circ\dots\circ\theta_n\in M_{S_K}$ for some $\theta_i\in S_K$, we define the length of $f$ to be $\ell(f)=n$. Note that this is well-defined since $M_{S_K}$ is free.    
\begin{lemma}\label{lem:catalanbd} Let $S_K$ be as in \eqref{SK} and suppose that $P\in\mathbb{P}^1(K)$ satisfies $h(P)>\log4$. Then there exists a constant $d_{K,P}$ (depending only on $K$ and $P$) such that if 
\begin{equation}\label{eq:relation} 
\theta\circ f(P)=\tau\circ g(P)
\end{equation} 
for some $\theta,\tau\in S_K$ and some $f,g\in M_{S_K}$, then either \vspace{.1cm} 
\[\text{$\theta=\tau$ and $f(P)=g(P)$\qquad or \qquad $\max\big\{\deg(\theta\circ f),\,\deg(\tau\circ g)\big\}\leq\,d_{K,P}$}.\]     
\end{lemma}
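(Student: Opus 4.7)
The plan is to reduce the relation $\theta\circ f(P)=\tau\circ g(P)$ to one or two instances of the Catalan equation and then invoke Brindza's theorem (Theorem \ref{thm:catalanSintegers}). Writing $\theta=z^{q_1}+1$ and $\tau=z^{q_2}+1$, the relation becomes $f(P)^{q_1}=g(P)^{q_2}$. Note that the defining condition of $S_K$ forces every prime appearing there to be coprime to $w_K$, hence odd (since $2\mid w_K$) and at least $3$. Moreover, the hypothesis $h(P)>\log 4$ together with Tate's Telescoping Lemma and the inequality \eqref{eq:unicrit+} yields $h(f(P)),h(g(P))>0$, so $f(P)$ and $g(P)$ are nonzero and are not roots of unity. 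When $q_1=q_2=:q$, the ratio $f(P)/g(P)\in K^*$ is a $q$-th root of unity, and $\gcd(q,w_K)=1$ forces it to equal $1$; thus $f(P)=g(P)$ and $\theta=\tau$, giving the first alternative in the conclusion.

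In the remaining case $q_1\neq q_2$, the primes are coprime, so Bezout furnishes $\alpha,\beta\in\mathbb{Z}$ with $\alpha q_1+\beta q_2=1$. Setting $u:=f(P)^\beta g(P)^\alpha\in K^*$, a direct computation using $f(P)^{q_1}=g(P)^{q_2}$ yields $u^{q_2}=f(P)$ and $u^{q_1}=g(P)$. I would then fix the finite set $\mathfrak{s}(P)$ of places of $K$ consisting of the archimedean places together with those dividing the denominator of $P$; note that $\mathfrak{s}(P)$ depends only on $K$ and $P$, not on $f$ or $g$. Since every polynomial in $S_K$ has integer coefficients, $f(P),g(P)\in\mathcal{O}_{K,\mathfrak{s}(P)}$, and $u\in\mathcal{O}_{K,\mathfrak{s}(P)}$ as well, because $q_2\,v_\mathfrak{p}(u)=v_\mathfrak{p}(f(P))\geq 0$ for all $\mathfrak{p}\notin\mathfrak{s}(P)$. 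Finally, $u$ is not a root of unity, since otherwise $f(P)=u^{q_2}$ would be.

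Suppose first that both $f$ and $g$ have positive length in $M_{S_K}$, and write $f=\alpha_1\circ\tilde f$, $g=\alpha_2\circ\tilde g$ with $\alpha_1=z^{q_3}+1$ and $\alpha_2=z^{q_4}+1$ in $S_K$. Then one obtains the pair of Catalan equations
\[u^{q_2}-\tilde f(P)^{q_3}=1\qquad\text{and}\qquad u^{q_1}-\tilde g(P)^{q_4}=1,\]
each with $nm\geq 9>4$, with $\mathfrak{s}(P)$-integral entries, and with no entry a root of unity (by the same height-positivity argument applied to $\tilde f(P)$ and $\tilde g(P)$). Brindza's theorem now bounds $q_1,q_2,q_3,q_4$ and $h(u),h(\tilde f(P)),h(\tilde g(P))$ by a constant $\kappa_{\mathfrak{s}(P)}$ depending only on $K$ and $P$. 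A final application of Tate's Telescoping Lemma converts the height bounds on $\tilde f(P)$ and $\tilde g(P)$ into degree bounds of the form $\deg(\tilde f),\deg(\tilde g)\leq\kappa_{\mathfrak{s}(P)}/(h(P)-\log 2)$, and these combined with the bounds on the $q_i$ give the desired bound on $\max\{\deg(\theta\circ f),\deg(\tau\circ g)\}$.

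The remaining subcases concern $f$ or $g$ being the identity in $M_{S_K}$. The case $f=g=\mathrm{id}$ forces $P^{q_1}=P^{q_2}$ and hence $P$ to be a root of unity, contradicting $h(P)>\log 4$. If exactly one of them, say $g$, is the identity, then $P=u^{q_1}$ and only the Catalan equation $u^{q_2}-\tilde f(P)^{q_3}=1$ is available; this still bounds $q_2,q_3,h(u)$, but not $q_1$. To control $q_1$ I would invoke Northcott's finiteness theorem: the set of non-roots-of-unity $v\in K$ with $h(v)\leq\kappa_{\mathfrak{s}(P)}$ is finite and thus has a positive minimum height $\mu_{K,P}$, so that $q_1=h(P)/h(u)\leq h(P)/\mu_{K,P}$. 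I expect the main obstacle to be precisely this identity-case degeneracy, together with the careful bookkeeping needed to ensure that $\mathfrak{s}(P)$ (and hence $\kappa_{\mathfrak{s}(P)}$) is fixed independently of $f$ and $g$, so that Brindza's constant does not vary with the particular relation being considered.
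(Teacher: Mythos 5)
Your proposal is correct and follows essentially the same route as the paper: reduce $f(P)^{q_1}=g(P)^{q_2}$ via Bezout to the auxiliary element $u$ (the paper calls it $t$), observe the Catalan equation(s) over the $\mathfrak{s}$-integers using $\theta,\tau$ monic with constant term $1$, invoke Brindza to bound the exponents and heights, convert height bounds on $\tilde f(P),\tilde g(P)$ back into degree bounds via Tate's Telescoping Lemma, and handle the degenerate identity subcase by a minimum-height (Northcott) argument to bound the remaining exponent. The only stylistic difference is that you observe directly that $q_1=q_2$ always yields the first alternative, whereas the paper instead derives a contradiction from $\theta=\tau$ in the one-identity case; both are valid, and yours is slightly cleaner.
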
 
\begin{proof} Assume that \eqref{eq:relation} holds and choose a set of places $\mathfrak{s}$ so that $P$ is $\mathfrak{s}$-integral. Now define $d_{K,P}$ depending on the constant $\kappa_{\mathfrak{s}}$ from Lemma \ref{lem:catalanbd} as follows: 
\[d_{K,P}:=\max\bigg\{\frac{\kappa_{\mathfrak{s}}^2\;h(P)}{\log(2)\,h_{K}^{\min}},\;\frac{\kappa_{\mathfrak{s}}^3}{\log2}\bigg\};\]
here $h_{K}^{\min}$ is the minimum height of the elements in $K$ which are not roots of unity. To proceed, we argue in cases. In what follows, we write $\theta(z)=z^{d_1}+1$ and $\tau(z)=z^{d_2}+1$ for some primes $d_1$ and $d_2$ coprime to the number of roots of unity in $K$. \\[3pt] 
\textbf{Case (1):} Suppose that $\ell(f)=\ell(g)=0$, i.e., $f$ and $g$ are both the identity function. Then \eqref{eq:relation} implies that $P^{d_1}=P^{d_2}$, and therefore $P^{d_1-d_2}=1$. Hence if $d_1\neq d_2$, then $P$ is a root of unity. But this implies that $h(P)=0$, contradicting our assumption that $h(P)>\log4$. In particular, $\theta=\tau$ and $f=g$ in this case.        \\[4pt] 
\textbf{Case (2):} Suppose that either $\ell(f)=0$ or $\ell(g)=0$, but not both. Without loss assume that $\ell(f)\geq1$ and $\ell(g)=0$, so we may write $f=\omega\circ f_1$ for some $\omega(z)=z^{d_3}+1\in S_K$ and some $f_1\in M_{S,K}$. Then \eqref{eq:relation} implies that 
\begin{equation}\label{case2}
f(P)^{d_1}=P^{d_2}.
\end{equation} 
In particular if $\tau=\theta$, then $d_1=d_2$ and $f(P)/P$ is a $d_1$'st root of unity. But $d_1$ is coprime to $w_K$ (the number of roots of unity in $K$) and $f(P)/P\in K$, so that $f(P)=P$. On the other hand, the lower bound in Tate's Telescoping Lemma \ref{lem:tate} and \eqref{eq:unicrit+} imply that 
\[3(h(P)-\log2)\leq\deg(f)(h(P)-\log2)\leq h(f(P))=h(P).\]
However, this implies that $2h(P)\leq3\log2$, again violating our assumption that $h(P)>\log4$. In particular, it must be the case that $\tau\neq\theta$, and thus $d_1\neq d_2$ are distinct odd primes. Therefore, there are integers $a_1$ and $a_2$ such that $a_1d_1+a_2d_2=1$, from which it follows from \eqref{case2} that
\begin{equation}\label{case2-catalan}
f(P)=t^{d_2}\;\;\text{and}\;\;P=t^{d_1},\qquad\text{for}\;\;t=f(P)^{a_2}\cdot P^{a_1}.
\end{equation} 
Note that $t$ is $\mathfrak{s}$-integral since $P$ is $\mathfrak{s}$-integral and the ring of $\mathfrak{s}$-integers is integrally closed. On the other hand, $f=\omega\circ f_1=f_1^{d_3}+1$ so that the left side of \eqref{case2-catalan} implies that $(x,y,n,m)=(t,f_1(P),d_2,d_3)$ is an $\mathfrak{s}$-integral solution to the Catalan equation \eqref{CatalanEquation}. Moreover, $f_1(P)$ cannot be a root of unity, since $\log2\leq\deg(f_1)(h(P)-\log2)=h(f_1(P))$ by Tate's Telescoping Lemma and our assumption on the height of $P$. Likewise, $t$ cannot be a root of unity since then $P$ (which equals $t^{d_1}$) must be. Therefore, Theorem \ref{thm:catalanSintegers} implies that 
\begin{equation}\label{case2:bd} 
\max\{h(t),h(f_1(P)),d_2,d_3\}\leq \kappa_{\mathfrak{s}}.
\end{equation}  
However, again applying Tate's telescoping Lemma to $f_1$, we deduce from \eqref{case2:bd} that 
\[
\deg(f_1)\log2\leq\deg(f_1)(h(P)-\log2)=h(f_1(P))\leq\kappa_{\mathfrak{s}}.
\]
In particular, $\deg(f_1)\leq \kappa_{\mathfrak{s}}/\log(2)$. Moreover, $d_3$ is also bounded by $\kappa_{\mathfrak{s}}$ so that 
\begin{equation}\label{case2:bd2} 
\deg(f)=\deg(f_1)\cdot d_3\leq \kappa_{\mathfrak{s}}^2/\log(2).
\end{equation} 
Likewise, $P=t^{d_1}$ implies that
\[d_1\, h_{K}^{\min}\leq d_1h(t)=h(t^{d_1})=h(P);\]
here we use that $t\in K$ is not a root of unity. In particular, $d_1\leq h(P)/h_{K}^{\min}$. Combining this fact with the bounds on \eqref{case2:bd} and \eqref{case2:bd2}, we see that 
\[\max\big\{\deg(\theta\circ f),\,\deg(\tau\circ g)\big\}=\max\big\{d_1\cdot\deg(f),d_2\big\}\leq \max\bigg\{\frac{\kappa_{\mathfrak{s}}^2 h(P)}{\log(2)\,h_{K}^{\min}},\;\kappa_{\mathfrak{s}}\bigg\}\leq d_{K,P}.\]
Here we use that $h(P)\geq h_{K}^{\min}$ and $\kappa_{\mathfrak{s}}>1$. This completes the proof of the claim in Case (2).               
\\[7pt] 
\textbf{Case (3):} Suppose that $\ell(f),\ell(g)\geq1$, so that we may write $f=\omega_1\circ f_1$ and $g=\omega_2\circ g_1$, where $\omega_1(z)=z^{e_1}+1$, $\omega_2(z)=z^{e_2}+1$, and $f_1,g_1\in M_{S_K}$. If $\tau=\theta$ then $d_1=d_2$ and $f(P)/g(P)$ is a $d_1$'st root of unity. But $d_1$ coprime to $w_K$ and $f(P)/g(P)\in K$, so that $f(P)=g(P)$. This is the first possible conclusion of Lemma \ref{lem:catalanbd}. 

Therefore, me may assume that $\tau\neq\theta$ and $d_1\neq d_2$ are distinct odd primes. In particular, there are integers $a_1$ and $a_2$ such that $a_1d_1+a_2d_2=1$. On the other hand \eqref{eq:relation} implies that $f(P)^{d_1}=g(P)^{d_2}$ from which it follows that
\begin{equation}\label{case3-catalan}
f_1(P)^{e_1}+1=f(P)=t^{d_2}\qquad\text{and}\qquad g_1(P)^{e_2}+1=g(P)=t^{d_1},  
\end{equation} 
where $t=f(P)^{a_2}\cdot g(P)^{a_1}$. Note that $f_1(P)$, $g_1(P)$ and $t$ are all $\mathfrak{s}$-integral since $P$ is $\mathfrak{s}$-integral, the coefficients of all relevant polynomials are integral, and the ring of $\mathfrak{s}$-integers is integrally closed (needed for $t$ only). Moreover $f_1(P)$, $g_1(P)$, $f(P)$, $g(P)$ cannot be roots of unity since, as argued in Case 2, Tate's telescoping Lemma and the fact that $h(P)>\log4$ together imply that each of these points has height at least $\log2$. There is nothing special about these functions; no points in $\Orb_{S_K}(P)$ can be roots of unity. Likewise $t$ cannot be a root of unity since otherwise $f(P)$, which equals $t^{d_2}$, must be. Therefore, \eqref{case3-catalan} implies that $(x,y,n,m)=(t,f_1(P),d_2,e_1)$ and $(x,y,n,m)=(t,g_1(P),d_1,e_2)$ are $\mathfrak{s}$-integral solutions to the Catalan equation. In particular, the bound in Theorem \ref{thm:catalanSintegers} implies that  
\begin{equation}\label{case3-catalan2}
\max\big\{h(t),h(f_1(P)),h(g_1(P)),d_1,d_2,e_1,e_2\big\}\leq\kappa_{\mathfrak{s}}.
\end{equation}
On the other hand, again applying Tate's telescoping Lemma to $f_1$, we deduce from \eqref{case3-catalan2} that 
\[\deg(f_1)\log2\leq\deg(f_1)(h(P)-\log2)=h(f_1(P))\leq\kappa_{\mathfrak{s}}.\]
In particular, $\deg(f_1)\leq \kappa_{\mathfrak{s}}/\log(2)$. Likewise, we may deduce that $\deg(g_1)\leq \kappa_{\mathfrak{s}}/\log(2)$. These facts coupled with \eqref{case3-catalan2} together imply that 
\begin{equation*} 
\begin{split} 
\deg(\theta\circ f)&=\deg(\theta\circ\omega_1\circ f_1)=d_1\cdot e_1\cdot \deg(f_1)\leq \frac{\kappa_{\mathfrak{s}}^3}{\log2} \\[5pt]
\deg(\tau\circ g)&=\deg(\tau\circ\omega_2\circ g_1)=d_2\cdot e_2\cdot \deg(g_1)\leq \frac{\kappa_{\mathfrak{s}}^3}{\log2}.
\end{split}  
\end{equation*}
In particular, $\max\{\deg(\theta\circ f),\deg(\tau\circ g)\}\leq \kappa_{\mathfrak{s}}^3/\log2\leq d_{K,P}$ by definition of $d_{K,P}$ as claimed.                
\end{proof} 
With Lemma \ref{lem:catalanbd} in place, we may define a few more constants depending only on the point $P\in K$ and and the number field $K$:   
\begin{equation}\label{catalnconstants}
t_{K,P}:=\#\{f\in M_{S_K}\,:\, \deg(f)\leq d_{K,P}\}\qquad \text{and} \qquad r_{K,P}:=\lceil\log_2\,d_{K,P}\rceil; 
\end{equation}   
note that there are only finitely functions of bounded degree in $M_{S_K}$. Hence, $t_{K,P}$ is well defined. From here, we are ready to prove the key lemma, which allows us to pass from counting functions to counting points in orbits; compare to \cite[Lemma 4.8]{Me:monoid}.  
\begin{lemma}\label{lem:pointstofunctionscatalan} Let $S_K$ be as in \eqref{SK} and suppose that $P\in\mathbb{P}^1(K)$ satisfies $h(P)>\log4$. Then there exists a constant $w_{K,P}$ (depending only on $K$ and $P$) such that
\[\#\{f\in M_{S_K}: f(P)=Q\}\leq w_{K,P}\]
holds for all $Q\in\Orb_{S_K}(P)$. Specifically, one can take $w_{K,P}=r_{K,P}\cdot t_{K,P}+1$. 
\end{lemma}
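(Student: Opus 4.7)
The plan is to iterate Lemma~\ref{lem:catalanbd} to peel off common leading generators from $\mathcal{F}(Q) := \{f \in M_{S_K} : f(P) = Q\}$. First I would dispose of the base case $Q = P$: Tate's Telescoping Lemma~\ref{lem:tate} combined with the unicritical height inequality \eqref{eq:unicrit+} shows that any $f \in M_{S_K}$ with $\deg f \geq 2$ satisfies $h(f(P)) \geq 2(h(P) - \log 2) > h(P)$, where the strict inequality uses $h(P) > \log 4$. Hence $f(P) \neq P$ unless $f = \id$, giving $\#\mathcal{F}(P) = 1$.

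For $Q \neq P$, every $f \in \mathcal{F}(Q)$ has $\deg f \geq 2$ and, by the freeness of $M_{S_K}$ (Theorem~\ref{thm:free}), admits a unique factorization $f = \theta \circ g$ with $\theta \in S_K$ and $g \in M_{S_K}$. I would then split into two cases. If every $f \in \mathcal{F}(Q)$ satisfies $\deg f \leq d_{K,P}$, then $\#\mathcal{F}(Q) \leq t_{K,P}$ directly from the definition of $t_{K,P}$. Otherwise, fix $f_0 = \theta_0 \circ g_0 \in \mathcal{F}(Q)$ with $\deg f_0 > d_{K,P}$; for any other $f = \theta \circ g \in \mathcal{F}(Q)$, applying Lemma~\ref{lem:catalanbd} to $\theta_0 \circ g_0(P) = \theta \circ g(P)$ rules out the bounded-degree alternative (since $\deg f_0 > d_{K,P}$) and forces $\theta = \theta_0$ together with $g(P) = g_0(P) =: Q'$. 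Conversely, any $g \in M_{S_K}$ with $g(P) = Q'$ yields $\theta_0 \circ g \in \mathcal{F}(Q)$, so the assignment $f \mapsto g$ defines a bijection between $\mathcal{F}(Q)$ and $\mathcal{F}(Q')$; in particular $\#\mathcal{F}(Q) = \#\mathcal{F}(Q')$.

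Iterating this step produces a sequence $Q = Q^{(0)}, Q^{(1)}, \ldots$ in $\Orb_{S_K}(P)$ with $\#\mathcal{F}(Q^{(0)}) = \#\mathcal{F}(Q^{(1)}) = \cdots$ and $Q^{(k)} = \theta^{(k)}(Q^{(k+1)})$ for the common leading generator $\theta^{(k)} \in S_K$ at step $k$. The main (though mild) obstacle will be to show this sequence is finite. I would argue that so long as $Q^{(k)} \neq P$, Tate's telescoping (applied to some $g \in M_{S_K}$ with $g(P) = Q^{(k)}$, of degree $\geq 2$) forces $h(Q^{(k)}) \geq 2(h(P) - \log 2) > \log 4$, and combined with $\deg \theta^{(k)} \geq 2$ this yields the strict decrease $h(Q^{(k+1)}) < h(Q^{(k)})$; Northcott's theorem in $K$ then precludes an infinite strictly decreasing sequence of heights among the $Q^{(k)} \in K$. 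At the terminal stage $k^\star$ either $Q^{(k^\star)} = P$ (so $\#\mathcal{F}(Q^{(k^\star)}) = 1$) or all of $\mathcal{F}(Q^{(k^\star)})$ has degree $\leq d_{K,P}$ (so $\#\mathcal{F}(Q^{(k^\star)}) \leq t_{K,P}$). In either case $\#\mathcal{F}(Q) \leq \max\{1, t_{K,P}\} = t_{K,P}$, which is a fortiori bounded by $r_{K,P} \cdot t_{K,P} + 1 = w_{K,P}$.
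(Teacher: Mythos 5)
Your proof is correct, and it takes a genuinely different and in fact cleaner route than the paper. The paper's argument runs in two passes: it first proves by induction on the minimum length $\ell_{P,Q}$ that $\#\{f\in M_{S_K}: f(P)=Q\}\leq \ell_{P,Q}\cdot t_{K,P}+1$, and then shows that for $\ell_{P,Q}>r_{K,P}$ one can strip a common prefix of length $\ell_{P,Q}-r_{K,P}$ from every $F$ with $F(P)=Q$, reducing to a point $Q'$ with $\ell_{P,Q'}\leq r_{K,P}$; this is where the auxiliary constant $r_{K,P}$ enters. You instead run a single descent: whenever some $f_0\in\mathcal{F}(Q)$ has $\deg f_0>d_{K,P}$, Lemma~\ref{lem:catalanbd} applied to the pair $(f_0,f)$ (with the large-degree alternative ruled out by $\deg f_0>d_{K,P}$) forces every $f\in\mathcal{F}(Q)$ to share the same leading generator $\theta_0$, giving a bijection $\mathcal{F}(Q)\cong\mathcal{F}(Q')$ with $Q'=g_0(P)$; Tate telescoping plus $h(P)>\log 4$ gives the strict height drop $h(Q^{(k+1)})<h(Q^{(k)})$ (here you need $h(Q^{(k)})>\log 2$, which follows since $h(Q^{(k)})>\log 4$ whenever $Q^{(k)}\neq P$), and Northcott over $K$ terminates the chain. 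Your approach dispenses with $r_{K,P}$ entirely and yields the sharper bound $\#\mathcal{F}(Q)\leq t_{K,P}$, which a fortiori implies the stated bound $w_{K,P}=r_{K,P}\cdot t_{K,P}+1$. One small presentational point: after the drop $h(Q^{(k+1)})<h(Q^{(k)})$, the heights along the chain are bounded above by $h(Q)$ and strictly decreasing, so by Northcott the points $Q^{(k)}$ are pairwise distinct and drawn from a finite set; it is worth saying this explicitly since you invoke Northcott rather than, say, a descent on $\deg f_0$ or on word length, either of which would also terminate and would avoid Northcott altogether.
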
 
\begin{proof}
First some notation. Given a point $Q\in\Orb_{S_K}(P)$ in the orbit of $P$, we define 
\[\ell_{P,Q}:=\min\{\ell(f): f(P)=Q\;\text{for}\;f\in M_{S_K}\}\] 
to be the minimum length of the functions evaluating $P$ to $Q$. Then the first step in the proof of Lemma \ref{lem:pointstofunctionscatalan} is to show that \vspace{.1cm} 
\begin{equation}\label{keybd1}
\qquad\#\{f\in M_{S_K}: f(P)=Q\}\leq \ell_{P,Q}\cdot t_{K,P}+1\qquad \text{for all $Q\in\Orb_{S_K}(P)$}. \vspace{.1cm} 
\end{equation}  
To do this, we proceed by induction on the minimum length $\ell_{P,Q}$. Note that if $\ell_{P,Q}=0$, then $P=Q$ and therefore any other function $f\in M_{S_K}$ evaluating $P$ to $Q$ must satisfy $f(P)=P$. But this is impossible for any non-identity $f\in M_{S_K}$ since $h(P)>\log4$ and
\begin{equation}\label{noperiodic}
\log2<3(h(P)-\log(2))-h(P)\leq \deg(f)(h(P)-\log(2))-h(P)\leq h(f(P))-h(P)
\end{equation} 
by Tate's Telescoping Lemma. Hence, the only function $f\in M_{S_K}$ with $f(P)=Q$ is the identity function when $\ell_{P,Q}=0$. In particular, \eqref{keybd1} holds as claimed. We now  proceed with the remaining cases and pro forma make $\ell_{P,Q}=1$ the base case of our induction argument.  \\[4pt]
\underline{Base case}: Suppose that $\ell_{P,Q}=1$ and choose some $\tau\in S_K$ with $\tau(P)=Q$. Now let $F\in M_{S_K}$ be any other function such that $F(P)=Q$. Then we may write $F=\theta\circ f$ for some $f\in M_{S_K}$ (possibly the identity function). Hence, letting $g$ be the identity function, we have that 
\[\theta\circ f(P)=\tau\circ g(P).\]
Therefore, Lemma \ref{lem:catalanbd} implies that either      
\[\text{$\theta=\tau$ and $f(P)=g(P)$\qquad or \qquad $\max\big\{\deg(\theta\circ f),\,\deg(\tau\circ g)\big\}\leq\,d_{K,P}$}.\]
However in the first case, $f(P)=g(P)=P$ so that $f$ must be the identity function also; see \eqref{noperiodic} in Case 1 above. Hence, $F=\tau$ in this case. In the other case, $\deg(F)=\deg(\theta\circ f)\leq d_{K,P}$. In particular, for all $Q\in\Orb_{S_K}(P)$ with $\ell_{P,Q}=1$, we have proven that there exists some $\tau=\tau_Q\in S_K$ satisfying \vspace{.1cm}  
\[\{F\in M_{S_K}\,: F(P)=Q\}\subseteq\{\tau_Q\}\cup\{F\in M_{S_K}\,:\, \deg(F)\leq d_{K,P}\}.\vspace{.1cm}\] 
In particular, \eqref{keybd1} holds as claimed in the $\ell_{P,Q}=1$ case. \\[7pt] 
\underline{Induction step}: Suppose that $\ell_{P,Q}=n+1$ and that \eqref{keybd1} holds for all $Q'\in\Orb_{S_K}(P)$ with $\ell_{P,Q'}\leq n$. Then we can choose (and fix) some $G=\tau\circ g$ satisfying $G(P)=Q$ with $\tau=\tau_Q\in S_K$ and $g\in M_{S_K}$ is of length $\ell(g)=n\geq1$. Now let $F\in M_{S_K}$ be any other function satisfying $F(P)=Q$. Then we may write $F=\theta\circ f$ for some $\theta\in S_K$ and some $f\in M_{S_K}$. In particular, Lemma \ref{lem:catalanbd} implies that either \vspace{.1cm}      
\begin{equation}\label{part1}
\text{$\theta=\tau$ and $f(P)=g(P)$\qquad or \qquad $\max\big\{\deg(\theta\circ f),\,\deg(\tau\circ g)\big\}\leq\,d_P$}. \vspace{.1cm}
\end{equation} 
Now let $Q'=g(P)$ so that we may rephrase \eqref{part1} as saying: \vspace{.1cm}   
\begin{equation}\label{part2}
\Scale[.92]{
\{F\in M_{S_K}\,: F(P)=Q\}\subseteq\{F\in M_{S_K}\,:\;F=\tau_Q\circ f \;\text{and}\; f(P)=Q'\}\cup\{F\in M_{S_K}\,:\, \deg(F)\leq d_{K,P}\}}. \vspace{.1cm} 
\end{equation} 
The point here is that $\tau_Q$ and $Q'$ are both independent of $F$ (they depend on $G$, which is fixed). On the other hand, $Q'\in\Orb_{S_K}(P)$ and $\ell_{P,Q'}\leq n$, so by induction \vspace{.1cm}
\begin{equation}\label{part3}  
\#\{f\in M_{S_K}\,:\,f(P)=Q'\}\leq\ell_{P,Q'}\cdot t_{K,P}+1\leq n\, t_{K,P}+1.
\end{equation} 
Hence, combining \eqref{part2} and \eqref{part3} we deduce that 
\[\#\{F\in M_{S_K}\,:\,F(P)=Q\}\leq (n\, t_{K,P}+1)+  t_{K,P}=(n+1)t_{K,P}+1=\ell_{P,Q}\cdot t_{K,P}+1\]
as desired. In particular, we have established \eqref{keybd1}, our first step in proving Lemma \ref{lem:pointstofunctionscatalan}.  

To complete the proof, we will show that if $\ell_{P,Q}$ is sufficiently large, then to count functions $F$ such that $F(P)=Q$, one can instead count functions $f$ with $f(P)=Q'$ and $\ell_{P,Q'}$ bounded. In particular combined with the first step \eqref{keybd1}, we obtain the desired general bound (independent of length). To do this, recall first that $r_{K,P}:=\lceil\log_2\,d_{K,P}\rceil$ and note that 
\begin{equation}\label{eq:catalanlength}
\deg(G)>2^{\ell(G)}\geq d_{K,P}\;\;\;\text{for all $G\in M_{S_K}$ with $\ell(G)\geq r_P$,}
\end{equation} 
since each non-identity map in $M_{S_K}$ has degree greater than $2$. Now suppose that $Q\in\Orb_{S_K}(P)$ is such that $\ell_{P,Q}>r_{K,P}$, and choose (and fix) a minimal length function $G$ such that $G(P)=Q$. Then, by the length assumption on $Q$, we may write $G=\tau_m\circ\dots\circ\tau_1\circ g$ for some $\tau_i\in S_K$ and $m\geq1$ and some $g\in M_{S_K}$ of length $r_{K,P}$. Similarly, due to the minimality of the length of $G$, we may write any other function $F$ with $F(P)=Q$ as $F=\theta_m\circ\dots\circ\theta_1\circ f$ for some $\tau_i\in S_K$ and $m\geq1$ and some $f\in M_{S_K}$ of length at least $r_{K,P}$. We'll show by induction on $m$ that 
\begin{equation}\label{claim}
\boxed{\text{\textbf{Claim:} $\tau_i=\theta_i$ for all $i$ and $f(P)=g(P)$}}\vspace{.1cm}
\end{equation}     
\underline{Base case:} Suppose that $m=1$. Then $F(P)=\theta_1\circ f(P)=\tau_1\circ g(P)=G(P)$. Hence, Lemma \ref{lem:catalanbd} implies that either 
\[\text{$\theta_1=\tau_1$ and $f(P)=g(P)$\qquad or \qquad $\max\big\{\deg(\theta_1\circ f),\,\deg(\tau_1\circ g)\big\}\leq\,d_{K,P}$}.\]
However, $\deg(\tau_1\circ g)=\deg(G)>d_{K,P}$ since $\ell(G)=1+r_{K,P}>r_{K,P}$; see \eqref{eq:catalanlength} above. In particular, $\theta_1=\tau_1$ and $f(P)=g(P)$ as claimed. \\[7pt] 
\underline{Induction step:} Suppose that $m\geq2$, that $\tau_{m-1}\circ\dots\circ\tau_1\circ g(P)=\theta_{m-1}\circ\dots\circ\theta_1\circ f(P)$ implies that $\tau_i=\theta_i$ for all $1\leq i\leq m-1$ and that $f(P)=g(P)$, and assume that  
\[\theta_{m}\circ\dots\circ\theta_1\circ f(P)=\tau_{m}\circ\dots\circ\tau_1\circ g(P).\] 
Now let $g'=\tau_{m-1}\circ\dots\circ\tau_1\circ g$ and $f'=\theta_{m-1}\circ\dots\circ\theta_1\circ f$. In particular, $\theta_m\circ f'(P)=\tau_m\circ g'(P)$. Therefore, Lemma \ref{lem:catalanbd} implies that either 
\[\text{$\theta_m=\tau_m$ and $f'(P)=g'(P)$\qquad or \qquad $\max\big\{\deg(\theta_m\circ f'),\,\deg(\tau_m\circ g')\big\}\leq\,d_{K,P}$}.\]  
But $\deg(\tau_m\circ g')>d_{K,P}$ since \[\ell(\tau_m\circ g')=1+\ell(g')\geq1+\ell(g)=1+r_{K,P}>r_{K,P};\] 
see \eqref{eq:catalanlength} above. In particular, $\theta_m=\tau_m$ and $f'(P)=g'(P)$. However, this means 
\[\tau_{m-1}\circ\dots\circ\tau_1\circ g(P)=\theta_{m-1}\circ\dots\circ\theta_1\circ f(P),\] 
which implies that $\tau_i=\theta_i$ for all $1\leq i\leq m-1$ and that $f(P)=g(P)$ by the induction hypothesis. Together with the fact that $\theta_m=\tau_m$, we have proven the claim \eqref{claim}.                
 
To summarize: suppose that $Q\in\Orb_{S_K}(P)$ is such that $\ell_{P,Q}>r_{K,P}$ and choose some minimal length $G$ with $G(P)=Q$. Now write $G=\tau_m\circ\dots\circ\tau_1\circ g$ for some $\tau_i\in S_K$ and $m\geq1$ and some $g\in M_{S_K}$ of length $r_{K,P}$. Then we have shown that if $F$ is \emph{any other function} satisfying $F(P)=Q$, then we may write 
\begin{equation}\label{reduction}
F=\tau_m\circ\dots\circ\tau_1\circ f\;\;\;\text{for some $f\in M_{S_K}$ satisfying $f(P)=g(P)$}. 
\end{equation} 
But $Q'=g(P)$ is in the orbit of $P$ and $\ell_{P,Q'}\leq\ell(g)=r_{K,P}$. Hence, \eqref{keybd1} implies that  
\[\#\{f\in M_{S_K}: f(P)=Q'\}\leq \ell_{P,Q'}\cdot t_{K,P}+1\leq r_{K,P}\cdot t_{K,P}+1.\]
Therefore, the number of such $F$'s in \eqref{reduction} is also bounded in this way: 
\[\#\{F\in M_{S_K}: F(P)=Q\}\leq r_{K,P}\cdot t_{K,P}+1;\] 
the key point is that both $Q'$ and the functions $\tau_i$ are independent of $F$. In particular, the bound $w_{K,P}:=r_{K,P}\cdot t_{K,P}+1$ satisfies 
\[\#\{F\in M_{S_K}: F(P)=Q\}\leq w_{K,P}\]
whenever $\ell_{P,Q}>r_{K,P}$. However, it also provides an upper bound when $\ell_{P,Q}\leq r_{K,P}$ by \eqref{keybd1}. This completes the proof of Lemma \ref{lem:pointstofunctionscatalan}. 
\end{proof} 
Finally, we are ready to prove Theorem \ref{thm:catalan} from the introduction: if $\mathfrak{q}$ is a uniformly-log discrete set of primes, then for all $\epsilon>0$ there is a $b=b(K,\mathfrak{q},\epsilon)$ such that 
\begin{equation}\label{eq:recap1}
(\log B)^{b}\ll\#\big\{Q\in \Orb_{S_{K,\mathfrak{q}}}(P)\,:\, H(Q)\leq B\big\}\ll (\log B)^{b+\epsilon}
\end{equation}
for all $P\in\mathbb{P}^1(K)$ with $H(P)>4$. 
\begin{proof}[(Proof of Theorem \ref{thm:catalan})] Let $\mathfrak{q}$ be a uniformly-log discrete set of primes. Then Corollary \ref{cor:examples} applied to $\mathfrak{d}=\mathfrak{q}$ and $c=1$ implies that for all $\epsilon>0$ there is a $b=b(K,\mathfrak{q},\epsilon)$ such that 
\begin{equation}\label{eq:recap2}
(\log B)^{b}\ll\#\big\{f\in M_{S_{K,\mathfrak{q}}}\,:\, H(f(Q))\leq B\big\}\ll (\log B)^{b+\epsilon}
\end{equation}
for all $P\in\mathbb{P}^1(K)$ with $H(P)>4$. On the other hand, it is certainly always true that 
\begin{equation}\label{eq:recap3}
\#\big\{Q\in \Orb_{S_{K,\mathfrak{q}}}(P)\,:\, H(Q)\leq B\big\}\leq\#\big\{f\in M_{S_{K,\mathfrak{q}}}\,:\, H(f(P))\leq B\big\}.
\end{equation} 
Moreover since $S_{K,\mathfrak{q}}\subseteq S_K$ (so that $M_{S_{K,\mathfrak{q}}}\subseteq M_{S_K}$), it follows from Lemma \ref{lem:pointstofunctionscatalan} that
\begin{equation}\label{eq:recap4}
w_{K,P}^{-1}\cdot\#\big\{f\in M_{S_{K,\mathfrak{q}}}\,:\, H(f(Q))\leq B\big\}\leq\#\big\{Q\in \Orb_{S_{K,\mathfrak{q}}}(P)\,:\, H(Q)\leq B\big\}. \vspace{.15cm}
\end{equation} 
In particular, \eqref{eq:recap2}, \eqref{eq:recap3}, \eqref{eq:recap4} together imply the desired bounds in \eqref{eq:recap1}.   
\end{proof}   
\begin{remark} In light of Lemmas \ref{lem:catalanbd} and \ref{lem:pointstofunctionscatalan}, it is tempting to think that Theorem \ref{thm:catalan} holds for the full semigroup generated by $z^q+1$ and $q$ is any prime (with or without the gcd condition). This may be so, however our bounds from section \ref{sec:countingfunctions} do not apply in this case, since the full set of rational primes do not form a uniformly log-discrete set. This leaves open the possibility that the poles of the generating functions we use accumulate, breaking down our estimates. That the primes are not uniformly log-discrete is perhaps deducible by elementary means. However, it certainly follows from Zhang's bounded gap theorem \cite{Zhang}. 
\end{remark} 
We can apply Theorem \ref{thm:catalan} to obtain an amusing arithmetic statement about the orbits generated by $z^q+1$ where $q$ is a Mersenne prime; see Corollary \ref{cor:amusing} from the introduction for an equivalent, but less dynamical looking, statement. 
\begin{corollary}\label{cor:catalan} Let $S=\big\{z^q+1:\,q\;\text{is a Mersenne prime}\big\}$. Then \vspace{.2cm}
\begin{equation*}
(\log B)^{0.60839}\ll\#\big\{Q\in \Orb_{S}(P)\,:\, H(Q)\leq B\big\}\ll (\log B)^{0.60872}\vspace{.2cm}
\end{equation*} 
for all $P\in\mathbb{P}^1(\mathbb{Q})\mysetminus\Big\{\infty,0,\pm{1},\pm{2},\pm{\frac{1}{2}},\pm{3},\pm{\frac{1}{3}},\pm{\frac{2}{3}},\pm{\frac{3}{2}},\pm{4},\pm{\frac{1}{4}},\pm{\frac{3}{4}}\Big\}$.  
\end{corollary}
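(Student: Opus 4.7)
The plan is to apply Theorem~\ref{thm:catalan} with $K = \mathbb{Q}$ and $\mathfrak{q} = \mathfrak{m}$, the set of Mersenne primes. First I would verify the hypotheses: every Mersenne prime $2^p - 1$ is odd, so the coprimality condition with $w_\mathbb{Q} = 2$ holds automatically. For uniform log-discreteness, note that distinct Mersenne exponents $p_1 < p_2$ are both odd primes, so $p_2 - p_1 \geq 2$, and hence $\log(2^{p_2}-1) - \log(2^{p_1}-1) \geq \log 2 - o(1)$ as $p_1 \to \infty$; a direct check on small cases then yields a uniform positive lower bound $\delta > 0$. The excluded set in the statement is exactly $\{r \in \mathbb{P}^1(\mathbb{Q}) : H(r) \leq 4\}$ (written in lowest terms with $\max(|a|,|b|) \leq 4$), so the hypothesis $H(P) > 4$ of Theorem~\ref{thm:catalan} is equivalent to excluding this finite list.

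The second and more technical step is to pin down the explicit exponents $0.60839$ and $0.60872$. For this I would follow Remark~\ref{rem:explicit}: set $T = \{\log q : q \in \mathfrak{m}\}$, choose $\delta > 0$ small and integer approximants $n_{t,\delta}, m_{t,\delta}, u_\delta$ as in Lemma~\ref{lem:diophantineapprox}, and then form the cutoff rational functions
\[
G_{1,\delta,N}(z) = \sum_{\substack{n \leq N \\ n \in T_{1,\delta}}} z^n + \frac{z^N}{1-z}, \qquad G_{2,\delta,N}(z) = \sum_{\substack{m \leq N \\ m \in T_{2,\delta}}} z^m,
\]
using the 51 currently known Mersenne primes to populate the truncated sums. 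By~\eqref{bd:beta's}, the numerically-computable roots $\alpha_{i,\delta,N} \in (0,1)$ of $G_{i,\delta,N}(\alpha) = 1$ yield $b_{2,\delta,N} \leq b_{2,\delta} \leq b_{1,\delta} \leq b_{1,\delta,N}$, which sandwiches the exponent produced by Theorem~\ref{thm:catalan}. I would then use \texttt{Magma} to solve for $\alpha_{i,\delta,N}$ to high precision and compute $b_{i,\delta,N} = u_\delta \log(\alpha_{i,\delta,N}^{-1})$, tuning $\delta$ and $N$ until $b_{2,\delta,N} \geq 0.60839$ and $b_{1,\delta,N} \leq 0.60872$. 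That such a tuning succeeds is guaranteed by~\eqref{smallF}, which tells us $b_{1,\delta} - b_{2,\delta} \to 0$ as $\delta \to 0$.

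The main obstacle is the explicit numerical control: we must remain \emph{agnostic} about whether $\mathfrak{m}$ is finite or infinite. The geometric tail $z^N/(1-z)$ in $G_{1,\delta,N}$ is crucial here, because it dominates any conceivable contribution from unknown Mersenne primes with approximant exponent exceeding $N$ (regardless of how many such primes exist), while $G_{2,\delta,N}$ uses only known primes and therefore gives a safe lower bound. Balancing the three sources of slack — the rational approximation error controlled by $\delta$, the truncation error controlled by $N$, and the propagation into $b_{i,\delta,N}$ via $u_\delta\log$ — is the only subtle aspect; everything else is an application of Theorem~\ref{thm:catalan} and elementary root-finding.
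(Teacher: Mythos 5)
Your proposal matches the paper's proof: verify that the Mersenne primes are uniformly log-discrete and that the excluded set is $\{P : H(P)\leq 4\}$, invoke Theorem~\ref{thm:catalan} with $K=\mathbb{Q}$, and then pin down the explicit exponents $0.60839$ and $0.60872$ via the truncated generating functions $G_{i,\delta,N}$ of Remark~\ref{rem:explicit} and the chain of inequalities~\eqref{bd:beta's}, with the geometric tail $z^N/(1-z)$ in $G_{1,\delta,N}$ making the upper bound safe regardless of how many Mersenne primes exist and the truncated $G_{2,\delta,N}$ giving a safe lower bound. The only cosmetic difference is that the paper populates the truncated sums with the first nine Mersenne primes and uses the $27$-digit size of the tenth to set the cutoff $N$, rather than all $51$ currently known ones; also note your intermediate claim $p_2-p_1\geq 2$ fails for the exponent pair $(2,3)$, but the small-case check you invoke (or the paper's cleaner observation that $(2^{x+1}-1)/(2^x-1)>1.5$ for $x>1$) repairs this.
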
 
\begin{proof} It is straightforward to see that the Mersenne primes are uniformly log-discrete. For instance, $(2^{x+1}-1)/(2^x-1)>1.5$ for all $x>1$, so that
\[\big|\log(q)-\log(q')\big|>\log(1.5)\]
for all distinct Mersenne primes $q$ and $q'$. In particular, $\#\big\{Q\in \Orb_{S}(P)\,:\, H(Q)\leq B\big\}$ is bounded by arbitrarily close powers of $\log(B)$ for all $P$ with $H(P)>4$ by Theorem \ref{thm:catalan}. To obtain the explicit bounds in Corollary \ref{cor:catalan} (whose powers of $\log(B)$ agree up to three decimal places), we use the approximation techniques discussed in Remark \ref{rem:explicit}, the first nine Mersenne primes:  
\[q=3,\,7,\,31,\,127,\, 8191,\,131071,\,524287,\, 2147483647,\, 2305843009213693951,\]  
and the fact that the tenth has $27$ digits. See the file called Mersenne at  
\[{\tt{https://sites.google.com/a/alumni.brown.edu/whindes/research}}\]
for the Magma code justifying these bounds.   
\end{proof} 
\begin{example} To help illustrate Corollary \ref{cor:catalan} in a more classically arithmetic way, consider the statement for $P=5$. Then \vspace{.1cm}
\[(\log B)^{0.60839}\ll\#\bigg\{t=((5^{q_1}+1)^{q_2}\dots +1)^{q_s}+1\;\Big\vert\; \text{$q_1,\dots, q_s\in\mathfrak{m}$, $t\leq B$}\bigg\}\ll (\log B)^{0.60872}\;, \vspace{.1cm}\]
where $\textbf{m}$ is the set of Mersenne primes.   
\end{example}

 \vspace{.2cm} 
\end{document}